\newcommand{\vecc}[1]{\mathbf{#1}}
\newcommand{\problem}[1]{
\noindent
\textsc{#1}
}
\newcommand{\modu}[1]{(\textnormal{mod }#1)}
\newtheorem{masttheo}{MastTheorem}
\newtheorem{theo}[masttheo]{Theorem}
\newtheorem{de}[masttheo]{Definition}
\newtheorem{lemma}[masttheo]{Lemma}
\newtheorem{cor}[masttheo]{Corollary}
\newtheorem{remark}[masttheo]{Remark}
\newtheorem{example}[masttheo]{Example}
\begin{document}

\title{Combinatorial Nullstellensatz modulo prime powers \\ and the Parity Argument}
\author{L\'aszl\'o Varga \\  {\small Institute of Mathematics, E\"otv\"os Lor\'and University, Budapest, \href{mailto:LVarga@cs.elte.hu}{\url{LVarga@cs.elte.hu}}}}

\maketitle

\begin{abstract}
We present new generalizations of Olson's theorem and of a consequence of Alon's Combinatorial Nullstellensatz. These enable us to extend some of their combinatorial applications with conditions modulo primes to conditions modulo prime powers.
We analyze computational search problems corresponding to these kinds of combinatorial questions and we prove that the problem of finding degree-constrained subgraphs modulo $2^d$ such as $2^d$-divisible subgraphs and the search problem corresponding to the Combinatorial Nullstellensatz over $\mathbb{F}_2$ belong to the complexity class Polynomial Parity Argument (PPA). 
\end{abstract}

\section{Introduction}

In this paper, we are interested in combinatorial and computational problems in connection with Alon's Combinatorial Nullstellensatz (\cite{alon}) which is a landmark theorem in algebraic combinatorics.

\begin{theo}[Combinatorial Nullstellensatz, Alon, \cite{alon}]
Let $\mathbb{F}$ be an arbitrary field, and let $f\in \mathbb{F}[x_1,\dots x_m]$ be an $m$-variable polynomial. Suppose that the degree of $f$ is $\sum_{j=1}^n t_j$, where each $t_j$ is a nonnegative integer, and that the coefficient of $\prod_{j=1}^m x_j^{t_j}$ is nonzero. Then, if $S_1,S_2,\dots,S_m$ are subsets of $\mathbb{F}$ with $|S_j| > t_j$ for all $j=1,\dots,m$, then there exists an  $(s_1,s_2,\dots, s_m) \in S_1 \times S_2 \times \dots \times S_m$ such that $f(s_1,s_2,\dots, s_m)\neq 0$.
\end{theo}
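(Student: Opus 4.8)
The plan is to argue by contradiction: assume $f(s_1,\dots,s_m)=0$ for every $(s_1,\dots,s_m)\in S_1\times\cdots\times S_m$, and derive a contradiction with the hypothesis that the coefficient of $\prod_{j=1}^m x_j^{t_j}$ is nonzero. First I would reduce to the case $|S_j|=t_j+1$ by discarding surplus elements from each $S_j$; this only makes the claim harder, and it lets me introduce the monic univariate polynomials $g_j(x_j)=\prod_{s\in S_j}(x_j-s)$ of degree $t_j+1$. Writing $g_j(x_j)=x_j^{t_j+1}-h_j(x_j)$ with $\deg h_j\le t_j$, every $s\in S_j$ satisfies $s^{t_j+1}=h_j(s)$.

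Next I would perform a multivariate reduction of $f$: as long as some monomial of $f$ contains a power $x_j^{a}$ with $a\ge t_j+1$, replace one factor $x_j^{t_j+1}$ by $h_j(x_j)$. The key bookkeeping observations are that (i) each such step strictly lowers the total degree of the monomial being rewritten, so the process terminates and never increases $\deg f$; (ii) the resulting polynomial $\bar f$ satisfies $\deg_{x_j}\bar f\le t_j$ for all $j$; (iii) $\bar f$ agrees with $f$ on the whole grid $S_1\times\cdots\times S_m$, hence vanishes there; and (iv) the coefficient of $\prod_{j=1}^m x_j^{t_j}$ is the same in $\bar f$ as in $f$, because that monomial already has all exponents at most $t_j$ and, by (i), no rewriting step can produce a monomial of the top total degree $\sum_j t_j$ that was not already present with that shape. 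In particular $\bar f$ is not the zero polynomial.

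Finally I would show that a nonzero polynomial $\bar f$ with $\deg_{x_j}\bar f\le t_j<|S_j|$ for every $j$ cannot vanish on all of $S_1\times\cdots\times S_m$, by induction on $m$. The base case $m=1$ is the fact that a nonzero one-variable polynomial of degree at most $t_1$ has at most $t_1$ roots. For the inductive step, expand $\bar f=\sum_{i=0}^{t_m}\bar f_i(x_1,\dots,x_{m-1})\,x_m^i$, pick an $i$ with $\bar f_i\not\equiv 0$, use the inductive hypothesis to find $(s_1,\dots,s_{m-1})$ with $\bar f_i(s_1,\dots,s_{m-1})\ne 0$, and then choose $s_m\in S_m$ that is not a root of the nonzero univariate polynomial $\bar f(s_1,\dots,s_{m-1},x_m)$. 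This contradicts the vanishing of $\bar f$ on the grid and completes the proof.

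I expect the only delicate point to be item (iv) — verifying that the reduction procedure leaves the coefficient of the leading monomial $\prod_j x_j^{t_j}$ untouched; everything else is routine. Alternatively, one can quote the division form of the Nullstellensatz ($f=\sum_j h_j g_j$ with $\deg h_j\le \deg f-|S_j|$) and read off the coefficient of $\prod_j x_j^{t_j}$ directly from that identity, but the self-contained reduction argument above avoids that dependency.
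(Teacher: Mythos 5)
Your argument is correct: it is precisely Alon's original proof of the Combinatorial Nullstellensatz (reduce to $|S_j|=t_j+1$, rewrite $f$ modulo the relations $x_j^{t_j+1}=h_j(x_j)$ coming from $g_j(x_j)=\prod_{s\in S_j}(x_j-s)$, observe that the coefficient of $\prod_j x_j^{t_j}$ survives because every rewriting step strictly lowers total degree while that monomial sits at the top degree, and then invoke the grid-vanishing lemma proved by induction on $m$). The paper itself states this theorem as a citation to \cite{alon} and offers no proof, so there is nothing internal to compare against; your handling of the one delicate point, item (iv), is the right one and the proof is complete.
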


The following corollary is often used implicitly in applications, see \cite{alon}.


\begin{cor} \label{combnullprimes}
Let $p$ be an arbitrary prime. Let us be given some $m$-variable polynomials $f_1,f_2,\ldots,f_n$ over $\mathbb{F}_p$ with no constant terms and $Q_1,Q_2,\ldots,Q_n\subseteq \mathbb{F}_p$ such that $0 \in Q_i$ for all $i$. If
\[
m>\sum_{i=1}^n \deg(f_i)\cdot |\mathbb{F}_{p} \backslash Q_i|,
\]
then there exists a vector $\vecc{0}\neq \vecc{x}\in \{0,1\}^m$ such that $f_i(\vecc{x})\in Q_i$ for all $i$. 
\end{cor}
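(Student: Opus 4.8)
The plan is to encode all the constraints into a single auxiliary polynomial and invoke Alon's Combinatorial Nullstellensatz. For each $i$ write $R_i = \mathbb{F}_p \setminus Q_i$ and set
\[
g_i(\vecc{x}) = \prod_{r \in R_i}\bigl(f_i(\vecc{x}) - r\bigr), \qquad G(\vecc{x}) = \prod_{i=1}^n g_i(\vecc{x}).
\]
For any $\vecc{x}$ we have $G(\vecc{x}) \neq 0$ if and only if $f_i(\vecc{x}) \notin R_i$, i.e.\ $f_i(\vecc{x}) \in Q_i$, for every $i$. Since each $f_i$ has no constant term, $f_i(\vecc{0}) = 0 \in Q_i$, hence $0 \notin R_i$ and $g_i(\vecc{0}) = \prod_{r \in R_i}(-r) \neq 0$, so $G(\vecc{0}) \neq 0$. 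Moreover $\deg G = \sum_{i=1}^n \deg(f_i)\cdot|R_i| = \sum_{i=1}^n \deg(f_i)\cdot|\mathbb{F}_p\setminus Q_i| < m$ by hypothesis. Thus it suffices to show that $G$ is nonzero at some point of $\{0,1\}^m$ other than $\vecc{0}$.

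Suppose for contradiction that $G(\vecc{x}) = 0$ for every $\vecc{x} \in \{0,1\}^m \setminus \{\vecc{0}\}$, and consider
\[
F(\vecc{x}) = G(\vecc{x}) - G(\vecc{0})\prod_{j=1}^m(1 - x_j).
\]
On $\{0,1\}^m$ the product $\prod_j(1-x_j)$ is $1$ at $\vecc{0}$ and $0$ elsewhere, so $F$ vanishes identically on $\{0,1\}^m$. On the other hand $\prod_j(1-x_j)$ contributes the monomial $x_1\cdots x_m$ with coefficient $(-1)^m$, while $\deg G < m$ contributes nothing in degree $m$; hence the coefficient of $x_1x_2\cdots x_m$ in $F$ equals $-(-1)^m G(\vecc{0}) \neq 0$ and $\deg F = m$. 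Now apply the Combinatorial Nullstellensatz with $t_j = 1$ for all $j$ — so $\sum_j t_j = m = \deg F$ and the coefficient of $\prod_j x_j^{t_j}$ is nonzero — and with $S_j = \{0,1\}$, for which $|S_j| = 2 > 1 = t_j$. This produces a point of $\{0,1\}^m$ at which $F$ is nonzero, contradicting $F \equiv 0$ on $\{0,1\}^m$. Therefore a nonzero $\vecc{x} \in \{0,1\}^m$ with $G(\vecc{x}) \neq 0$ exists, which is exactly a nonzero $\vecc{x}$ with $f_i(\vecc{x}) \in Q_i$ for all $i$.

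The one genuine idea, and the step I expect to be the crux, is the choice of the auxiliary polynomial $F$: subtracting $G(\vecc{0})\prod_j(1-x_j)$ simultaneously eliminates $\vecc{0}$ as the sole exceptional point and bumps the degree up to exactly $m$, which is precisely why the hypothesis $m > \sum_i \deg(f_i)\cdot|\mathbb{F}_p\setminus Q_i|$ is the right threshold for Alon's theorem to bite. Everything else — the equivalence $G(\vecc{x})\neq 0 \Leftrightarrow f_i(\vecc{x})\in Q_i$, the nonvanishing at $\vecc{0}$, and the degree count — is routine bookkeeping with evaluations and degrees over the field $\mathbb{F}_p$.
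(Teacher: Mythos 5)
Your proof is correct and follows essentially the same route as the paper: both construct the auxiliary polynomial $\prod_i\prod_{q\notin Q_i}(f_i(\vecc{x})-q)$ minus a constant times $\prod_j(1-x_j)$ to force degree exactly $m$ with a nonzero coefficient on $x_1\cdots x_m$, and then apply the Combinatorial Nullstellensatz with $S_j=\{0,1\}$. The only cosmetic difference is that you phrase the final step as a contradiction, while the paper directly reads off from $f(\vecc{s})\neq 0$ that $\vecc{s}\neq\vecc{0}$ and $f_i(\vecc{s})\in Q_i$ for all $i$.
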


\begin{proof}
Let $f(\vecc{x})=\prod_{i=1}^n \prod_{q \not \in Q_i} (q-f_i(\vecc{x})) - c \cdot \prod_{j=1}^m (1-x_j)$ over $\mathbb{F}_p$, where $c=\prod_{i=1}^n \prod_{q \not \in Q_i} q$. It is easy to check that $\deg(f)=m>\sum_{i=1}^n \deg(f_i)\cdot |\mathbb{F}_{p} \backslash Q_i|$ and for a vector $\vecc{x}\in \{0,1\}^m$, $f(\vecc{x})\neq 0$ if and only if $\vecc{0}\neq \vecc{x}$ and  $f_i(\vecc{x})\in Q_i$ for all $i$. Then, with setting $S_i=\{0,1\}$ for all $i$, the Combinatorial Nullstellensatz implies the statement.
\end{proof}

\bigskip 

The goal of this paper is to give similar theorems for problems modulo arbitrary prime powers: we prove that if the number $m$ of variables is sufficiently large, the corollary also holds modulo arbitrary prime powers. We develop a general method for the Combinatorial Nullstellensatz-type proofs, where the polynomials are modulo prime powers instead of primes. As an application, we extend the following theorem of Olson (\cite{ols}) and its generalization by Alon, Friedland and Kalai \cite{reg}.

Let us be given a prime $p$, nonnegative integers $d_1\geq d_2 \geq \dots \geq d_n$ and sets $Q_1,Q_2,\dots,Q_n$  such that each of them contains zero and $Q_i \subseteq\mathbb{Z}_{p^{d_i}}$ for every $i=1,\dots ,n$. Let us denote $(d_1,d_2,\dots, d_n)$ by $\mathbf{d}$ and  $(Q_1,Q_2,\dots, Q_n)$ by $\mathbf{Q}$.

Alon et al. \cite{reg} ask to determine the minimum value $F(\mathbf{d},\mathbf{Q})$ such that for every $m>F(\mathbf{d},\mathbf{Q})$ and for arbitrary integers $a_{ij}$ $\left(i=1,\dots ,n,j=1,\dots ,m\right)$ there exists a nonempty subset $J \subseteq \{1,2,\dots,m \}$ that fulfills the following condition:
\[
\sum_{j\in J} a_{ij} \equiv q_i \qquad \modu{p^{d_i}} \quad \textnormal{ for some } q_i \in Q_i \textnormal{ for every }i=1,\dots ,n. \label{eq:cong}
\tag{$\clubsuit$}
\]

Using this terminology, we can easily formulate Olson's theorem and its extension by Alon et al. as follows:

\begin{theo}[Olson, \cite{ols}] \label{olson}
$F(\mathbf{d},\mathbf{Q}) = \sum_{i=1}^n \left(p^{d_i} - 1\right)$, if $\{0\}=Q_i$ for all $i$.
\end{theo}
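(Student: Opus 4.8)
The quantity $F(\mathbf d,\mathbf Q)$ with all $Q_i=\{0\}$ is, up to a shift by one, the Davenport constant of the finite abelian $p$-group $G:=\bigoplus_{i=1}^n\mathbb Z_{p^{d_i}}$: a table of integers $(a_{ij})$ is the same datum as a sequence $g_1,\dots,g_m\in G$ with $g_j=(a_{1j},\dots,a_{nj})$ reduced coordinatewise, and condition $(\clubsuit)$ with every $q_i=0$ says exactly that $\sum_{j\in J}g_j=0$ in $G$ for some nonempty $J$. Accordingly I would prove the two inequalities $F(\mathbf d,\mathbf Q)\ge\sum_i(p^{d_i}-1)$ and $F(\mathbf d,\mathbf Q)\le\sum_i(p^{d_i}-1)$ separately; since the property in the definition of $F$ is monotone in $m$ (restrict to the first $m$ columns), the first amounts to exhibiting one bad table with $m=\sum_i(p^{d_i}-1)$ columns and the second to showing that $m=1+\sum_i(p^{d_i}-1)$ always suffices.

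For the lower bound, partition $\{1,\dots,m\}$ with $m=\sum_i(p^{d_i}-1)$ into blocks $B_1,\dots,B_n$ with $|B_i|=p^{d_i}-1$, and for $j\in B_i$ put $a_{ij}=1$ and $a_{i'j}=0$ for $i'\ne i$ (i.e.\ take the sequence in $G$ consisting of the $i$-th standard generator with multiplicity $p^{d_i}-1$). For a nonempty $J$ one has $\sum_{j\in J}a_{ij}=|J\cap B_i|\in\{0,\dots,p^{d_i}-1\}$, so the $i$-th congruence forces $|J\cap B_i|=0$; since this holds for every $i$ we get $J=\emptyset$, a contradiction. Hence no such $m$ works and $F(\mathbf d,\mathbf Q)\ge\sum_i(p^{d_i}-1)$.

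The upper bound is the substantive half, and I would run Olson's group-algebra argument. Work in $R:=\mathbb F_p[G]$ with $\mathbb F_p$-basis $\{[g]:g\in G\}$ and multiplication $[g][h]=[g+h]$; here $[0]$ (the image of the identity of $G$) is the unit of $R$. Because $x^{p^{d}}-1=(x-1)^{p^{d}}$ in characteristic $p$, there is an algebra isomorphism $R\cong A:=\mathbb F_p[y_1,\dots,y_n]/(y_1^{p^{d_1}},\dots,y_n^{p^{d_n}})$ carrying the $i$-th standard generator $[e_i]$ to $1+y_i$; under it every $[g]-[0]$ lies in the ideal $I:=(y_1,\dots,y_n)$. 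The key observation is that $I^{\,1+\sum_i(p^{d_i}-1)}=0$ in $A$: a monomial $y_1^{a_1}\cdots y_n^{a_n}$ of total degree greater than $\sum_i(p^{d_i}-1)$ must, by pigeonhole, satisfy $a_i\ge p^{d_i}$ for some $i$, hence vanish in $A$.

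Now let $g_1,\dots,g_m\in G$ with $m=1+\sum_i(p^{d_i}-1)$ and put $P:=\prod_{j=1}^m\bigl([g_j]-[0]\bigr)\in R$. Each factor lies in $I$, so $P=0$ by the nilpotency bound. Expanding the product and using $[g][h]=[g+h]$,
\[
P=\sum_{J\subseteq\{1,\dots,m\}}(-1)^{m-|J|}\Bigl[\,\textstyle\sum_{j\in J}g_j\,\Bigr],
\]
and comparing coefficients of the basis element $[0]$ on both sides gives $\sum_{J:\,\sum_{j\in J}g_j=0}(-1)^{m-|J|}=0$ in $\mathbb F_p$. If no \emph{nonempty} $J$ were zero-sum, the only surviving term would be $J=\emptyset$, giving $(-1)^m\ne0$ — a contradiction. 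Hence some nonempty $J$ satisfies $\sum_{j\in J}g_j=0$, i.e.\ $(\clubsuit)$ with all $q_i=0$; this proves $F(\mathbf d,\mathbf Q)\le\sum_i(p^{d_i}-1)$ and the theorem. The one genuinely nontrivial ingredient is the nilpotency index of the augmentation ideal $I$ — really the identification $R\cong A$ that makes it transparent — while everything else is bookkeeping.
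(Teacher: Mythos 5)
Your proof is correct, but it takes a genuinely different route from the paper. The paper treats Theorem~\ref{olson} as a special case of its own machinery: the upper bound comes from Theorem~\ref{genolsontheorem} (itself a consequence of Theorem~\ref{maintheorem}, proved via the Combinatorial Nullstellensatz) together with the observation that the single integer-valued polynomial $h(T)=\frac{1}{p^\delta}\prod_{q\neq 0}(T-q)$ covers $\mathbb{Z}_{p^{d}}\setminus\{0\}$ with price $p^{d}-1$; the matching lower bound appears as the case $R_i=\{0,1,\dots,d_i-1\}$ of Theorem~\ref{rzerolson}, with essentially the same extremal example you give (all entries $-1$ in disjoint blocks versus your standard generators -- the two tables are equivalent). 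You instead reproduce Olson's original argument: identify the problem with the Davenport constant of the $p$-group $G=\bigoplus_i\mathbb{Z}_{p^{d_i}}$, pass to the group algebra $\mathbb{F}_p[G]\cong\mathbb{F}_p[y_1,\dots,y_n]/(y_i^{p^{d_i}})$, and use the nilpotency index $1+\sum_i(p^{d_i}-1)$ of the augmentation ideal to force a nonempty zero-sum subset; all steps (the isomorphism via $x^{p^d}-1=(x-1)^{p^d}$, the pigeonhole bound on monomial degrees, and the coefficient extraction at $[0]$) check out, including the case $p=2$ where $(-1)^m=1\neq 0$. Your approach is more self-contained and elementary for this particular statement, and it makes the exact value (both bounds) transparent in one framework; what it does not buy is any of the generality the paper is after -- it is tied to the linear forms $f_i(\vecc{x})=\sum_j a_{ij}x_j$ and to $Q_i=\{0\}$, whereas the paper's Nullstellensatz/price route extends to arbitrary target sets $Q_i$, to nonlinear $f_i$, and feeds directly into the PPA reductions of Sections~\ref{sec:combnull} and~\ref{sec:subgraphs}.
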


\begin{theo}[Alon, Friedland, Kalai, \cite{reg}] \label{alonolson}
$F(\mathbf{d},\mathbf{Q}) \leq \sum_{i=1}^n \left(p^{d_i} - card_p(Q_i)\right)$ where $card_p(Q)$ denotes the number of distinct elements in $Q$ modulo $p$.
\end{theo}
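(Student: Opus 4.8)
One could try to mimic the proof of Corollary~\ref{combnullprimes}, encoding the condition ``$\sum_{j}a_{ij}x_{j}\bmod p^{d_i}\in Q_i$'' for $x\in\{0,1\}^{m}$ by a product over $i$ of one-variable polynomials in $\sum_j a_{ij}x_j$ built from binomial coefficients $\binom{L+p^{d_i}-1}{p^{d_i}-1}$; but these factors have degree $p^{d_i}-1$, so Combinatorial Nullstellensatz only recovers Olson's bound $\sum_i(p^{d_i}-1)$. To gain the improvement to $\sum_i(p^{d_i}-card_p(Q_i))$ I would instead adapt Olson's group-algebra argument. Put $G=\mathbb{Z}_{p^{d_1}}\times\cdots\times\mathbb{Z}_{p^{d_n}}$ and work in $A=\mathbb{F}_p[G]$, writing $e_g$ for the basis vector of $g\in G$. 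With $\gamma_i$ a generator of the $i$-th factor (a unit of $A$) and $t_i=\gamma_i-1$ one has $A\cong\mathbb{F}_p[t_1,\dots,t_n]/(t_1^{p^{d_1}},\dots,t_n^{p^{d_n}})$, and its augmentation ideal $\mathfrak{m}=(t_1,\dots,t_n)$ satisfies $\mathfrak{m}^{N+1}=0$ with $N=\sum_{i=1}^{n}(p^{d_i}-1)$, since every nonzero monomial of $A$ has total degree at most $N$. To column $j$ attach $g_j=(a_{1j},\dots,a_{nj})\in G$, so that $e_{g_j}=\prod_i(1+t_i)^{a_{ij}}$, $e_{g_j}-1\in\mathfrak{m}$, and the exact identity $\prod_{j=1}^{m}(e_{g_j}-1)=\sum_{J\subseteq\{1,\dots,m\}}(-1)^{m-|J|}e_{\sigma(J)}$ holds, where $\sigma(J)=\sum_{j\in J}g_j$ and $\sigma(\emptyset)=0$.

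The key new ingredient---and the step I expect to be the main obstacle---is a one-variable lemma: for $Q\subseteq\mathbb{Z}_{p^{d}}$ with $0\in Q$ and $c=card_p(Q)$ there are scalars $\mu_q\in\mathbb{F}_p$ $(q\in Q)$ with $\mu_0\neq 0$ such that $w:=\sum_{q\in Q}\mu_q(1+t)^{-q}$ is divisible by $t^{c-1}$ in $\mathbb{F}_p[t]/(t^{p^{d}})$. Divisibility by $t^{c-1}$ is the system of $c-1$ linear conditions $\sum_{q\in Q}\mu_q\binom{q+s-1}{s}=0$ for $s=0,1,\dots,c-2$; since $s\le c-2<p$, each map $q\mapsto\binom{q+s-1}{s}$ is a polynomial of degree exactly $s$ in $q\bmod p$, so the span of the coefficient vectors of these conditions is $\{(P(q\bmod p))_{q\in Q}:P\in\mathbb{F}_p[x],\ \deg P\le c-2\}$. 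A solution with $\mu_0\neq 0$ exists precisely when $(\mathbf{1}_{\{0\}}(q))_{q\in Q}$ is \emph{not} in that span, which I would verify by a case split: either $Q$ contains some $q^{*}\neq 0$ with $q^{*}\equiv 0\pmod p$, and then no polynomial in $q\bmod p$ can separate $0$ from $q^{*}$; or $\mathbf{1}_{\{0\}}$ factors through $q\mapsto q\bmod p$, and the induced function on the $c$-element set $Q\bmod p$ (which is $1$ at $0$ and $0$ elsewhere) has Lagrange interpolant of degree exactly $c-1$. This is exactly the place where the saving from $1$ to $card_p(Q_i)$ is produced.

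With the lemma in hand the assembly is routine. For each $i$ let $w_i\in A$ be the lemma's element built inside the $i$-th tensor factor from $Q_i$, with scalars $\mu^{(i)}_q$, and set $w=\prod_{i=1}^{n}w_i$. Then $w\in\mathfrak{m}^{\sum_i(c_i-1)}$, while $w=\sum_{q\in Q}\nu_q e_{-q}$ with $Q=\prod_i Q_i$, $\nu_q=\prod_i\mu^{(i)}_{q_i}$, and $\nu_0=\prod_i\mu^{(i)}_0\neq 0$. Consider $z=\bigl(\prod_{j=1}^{m}(e_{g_j}-1)\bigr)w\in\mathfrak{m}^{\,m+\sum_i(c_i-1)}$; since $m>\sum_i(p^{d_i}-c_i)$ forces $m+\sum_i(c_i-1)\ge N+1$, we get $z=0$. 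Expanding, $z=\sum_{J}\sum_{q\in Q}(-1)^{m-|J|}\nu_q\,e_{\sigma(J)-q}$, so the coefficient of $e_0$ equals $\sum_{J:\ \sigma(J)\in Q}(-1)^{m-|J|}\nu_{\sigma(J)}=0$. The term $J=\emptyset$ contributes $(-1)^{m}\nu_0\neq 0$, so some nonempty $J$ has $\sigma(J)\in Q$, i.e.\ $\sum_{j\in J}a_{ij}\equiv q_i\pmod{p^{d_i}}$ with $q_i\in Q_i$ for every $i$; this is condition~$(\clubsuit)$, so $F(\mathbf{d},\mathbf{Q})\le\sum_{i=1}^{n}\bigl(p^{d_i}-card_p(Q_i)\bigr)$. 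Only standard verifications remain: that $1+t_i$ is invertible in $A$, that $\mathfrak{m}^{N+1}=0$, and the tensor-factor bookkeeping relating $w$ to the $w_i$.
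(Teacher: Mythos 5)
Your proof is correct, but it takes a genuinely different route from the paper. The paper obtains Theorem~\ref{alonolson} as a corollary of Theorem~\ref{genolsontheorem}: it exhibits the single integer-valued polynomial $h(T)=p^{-\delta}\prod_{q\notin Q'}(T-q)$ of Equation~(\ref{eq:alonconstr}), checks that it covers $\mathbb{Z}_{p^d}\backslash Q'$ in the sense of Definition~\ref{covering} with price $p^d-card_p(Q)$, and then feeds this into the Combinatorial Nullstellensatz over $\mathbb{F}_p$ via the $\Psi^h$ construction of Section~\ref{sec:proof}. You instead run Olson's group-algebra argument in $\mathbb{F}_p[G]$ with the nilpotent augmentation ideal $\mathfrak{m}$, and produce the saving from $1$ to $card_p(Q_i)$ by multiplying $\prod_j(e_{g_j}-1)$ with an auxiliary element $w_i$ divisible by $t_i^{c_i-1}$ yet carrying a nonzero coefficient on $e_0$. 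Your key lemma checks out: for $s\le c-2<p$ the map $q\mapsto\binom{q+s-1}{s}$ is a polynomial of degree exactly $s$ in $q\bmod p$, so the row space of your linear system is exactly the degree-$\le c-2$ polynomials evaluated on $Q\bmod p$, and your two-case argument correctly shows the indicator of $q=0$ lies outside it (in the second case because a nonzero polynomial of degree $\le c-2$ cannot vanish at the $c-1$ nonzero residues of $Q\bmod p$); the nilpotency bookkeeping $m+\sum_i(c_i-1)\ge N+1$ then closes the argument. The trade-off: the paper's covering/price machinery is strictly more general --- it applies to arbitrary polynomials $f_i$ (Theorem~\ref{maintheorem}), allows several covering polynomials per coordinate, and via $\kappa$ can beat the bound $\sum_i\left(p^{d_i}-card_p(Q_i)\right)$ --- whereas your argument is confined to linear forms and to this specific bound; in exchange, yours is self-contained, avoids the Nullstellensatz entirely, and is arguably the shortest path to Theorem~\ref{alonolson} itself.
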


Whereas Theorem \ref{alonolson} does not seem to be a strong estimation because of $card_p(Q)\leq p$, no better estimation has been known thus far.

It is worth noting that for $d_1=d_2=\dots=d_n=1$, Theorem \ref{olson} and Theorem \ref{alonolson} immediately follow from Corollary \ref{combnullprimes}: for $f_i(\vecc{x})=\sum_{j=1}^{m} a_{ij}x_j$, there exists a vector $\vecc{0}\neq \vecc{x}\in \{0,1\}^m$ such that $f_i(\vecc{x})\in Q_i$ for all $i$. Consequently, $J=\{j : x_j = 1 \}$ fulfills the condition (\ref{eq:cong}).

Motivated by these questions, in this paper, we give analogous theorems modulo arbitrary prime powers instead of primes, extending Corollary \ref{combnullprimes}, and give improved bounds on $F(\mathbf{d},\mathbf{Q})$.
 

\subsection{Complexity aspects}

As an application of Olson's theorem, Alon, Friedland and Kalai \cite{reg} discussed the following extremal graph theoretic question. Given a prime power $p^d$ and an integer $n$, the problem is to determine the smallest value of $m$ such that for every graph on $n$ vertices and $m$ edges, there exists a nonempty $p^d$-divisible subgraph, that is, a nonempty subset of edges such that the number of edges incident to every vertex is divisible by $p^d$. Conversely, determine the maximum number of edges a graph can have without containing a nonempty $p^d$-divisible subgraph. The exact answer was given in \cite{reg}, see Theorem \ref{qdiv}.

A natural question is to determine the computational complexity of finding such a subgraph if the graph has sufficiently large number of edges. For the case $p^d=2$, the problem is equivalent to finding a cycle in a graph. In this case, there exists a polynomial time algorithm, but the problem is open in all other cases.

Due to various applications of the Combinatorial Nullstellensatz, it is also a natural question to determine the computational complexity of the corresponding search problem. An open question by West \cite{west} is about the complexity of the Combinatorial Nullstellensatz over $\mathbb{F}_2=\{0,1\}$. He conjectures that the corresponding search problem belongs to the complexity class Polynomial Parity Argument (PPA) defined by Papadimitriou \cite{papa}. This complexity class contains such computational search problems that the existence of a solution can be proved by so-called parity argument: \textit{Every finite graph has an even number of odd-degree nodes.} In this paper, we verify his conjecture.



\section{Main results} \label{sec:mainresults}




Now we present the first main result of this paper: the extension of Corollary \ref{combnullprimes} for arbitrary prime powers. This theorem also implies Theorem \ref{olson} and Theorem \ref{alonolson}.


\begin{de} \label{covering}
Let $h(x)$ be an integer-valued polynomial in $\mathbb{Q}[x]$ such that $h(0)$ is not divisible by $p$. We say that $B \subseteq \mathbb{Z}_{p^{d}}$ is covered by a set of such integer-valued polynomials $\mathcal{H}$ if for every $b\in B$, we have $p\mid h(b)$ for at least one $h\in\mathcal{H}$. The price of the set $B$ is defined as
\[
price(B)=\min \{ \sum_{h\in\mathcal{H}} \deg(h) : B \textnormal{ is covered by }\mathcal{H} \textnormal{, such that for all } h\in\mathcal{H}, p \nmid h(0)\}.
\]
\end{de}

\begin{theo} \label{maintheorem}
Suppose that there are given some $m$-variable polynomials $f_1,f_2,\ldots,f_n$ over $\mathbb{Z}$ without constant terms and some sets $Q_1,Q_2,\ldots,Q_n$ such that $Q_i \subseteq \mathbb{Z}_{p^{d_i}}$ and $0\in Q_i$ for all $i$. If
\[
m > \sum_{i=1}^n \deg(f_i)\cdot price(\mathbb{Z}_{p^{d_i}} \backslash Q_i)
\]
then exists a $\vecc{0}\neq \vecc{x}\in \{0,1\}^m$ such that $f_i(\vecc{x}) \equiv q_i \qquad \modu{p^{d_i}} \quad \textnormal{ for some }q_i \in Q_i \textnormal{ for all }i.$
\end{theo}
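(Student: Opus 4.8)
The plan is to reduce Theorem~\ref{maintheorem} to the Combinatorial Nullstellensatz over $\mathbb{F}_p$ by building, for each congruence condition $f_i(\vecc{x})\in Q_i \pmod{p^{d_i}}$, a single $\mathbb{F}_p$-polynomial whose degree matches the contribution $\deg(f_i)\cdot price(\mathbb{Z}_{p^{d_i}}\setminus Q_i)$, just as in the proof of Corollary~\ref{combnullprimes}. The key new ingredient is the notion of \emph{price}: fix $i$ and let $B_i=\mathbb{Z}_{p^{d_i}}\setminus Q_i$. Choose a covering family $\mathcal{H}_i$ of integer-valued polynomials witnessing $price(B_i)$, i.e.\ $\sum_{h\in\mathcal{H}_i}\deg(h)=price(B_i)$, each $h$ has $p\nmid h(0)$, and every $b\in B_i$ satisfies $p\mid h(b)$ for some $h\in\mathcal{H}_i$. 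I would then form, over $\mathbb{F}_p$, the polynomial
\[
g_i(\vecc{x}) \;=\; \prod_{h\in\mathcal{H}_i} h\bigl(f_i(\vecc{x})\bigr) \bmod p .
\]
Because each $f_i$ has no constant term, $g_i(\vecc{0})=\prod_{h\in\mathcal{H}_i} h(0)\not\equiv 0\pmod p$, so after scaling we may assume $g_i(\vecc{0})=1$. Conversely, if $\vecc{x}\in\{0,1\}^m$ has $f_i(\vecc{x})\bmod p^{d_i}\in B_i$, then some $h\in\mathcal{H}_i$ kills it mod $p$, hence $g_i(\vecc{x})\equiv 0$; equivalently $g_i(\vecc{x})\neq 0$ forces $f_i(\vecc{x})\bmod p^{d_i}\in Q_i$. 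Note $\deg g_i \le \deg(f_i)\cdot\sum_{h\in\mathcal{H}_i}\deg(h)=\deg(f_i)\cdot price(B_i)$ as a polynomial over $\mathbb{F}_p$ in the $x_j$'s; there is a subtlety here, since after reducing $x_j^2=x_j$ the degree can only drop, but I must be careful that I bound the degree \emph{before} such reduction, exactly as in the classical argument.

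Next I would assemble the master polynomial
\[
f(\vecc{x}) \;=\; \Bigl(\prod_{i=1}^n g_i(\vecc{x})\Bigr) \;-\; \prod_{j=1}^m (1-x_j)
\]
over $\mathbb{F}_p$. Evaluated on $\{0,1\}^m$: the second term is $1$ at $\vecc{x}=\vecc{0}$ and $0$ otherwise, while the first term is $1$ at $\vecc{x}=\vecc{0}$ (by the normalization $g_i(\vecc{0})=1$) and at any other $\vecc{x}$ is nonzero iff all conditions $f_i(\vecc{x})\in Q_i$ hold. Hence $f(\vecc{x})\neq 0$ for some $\vecc{x}\in\{0,1\}^m$ exactly when there is a nonzero $\vecc{x}$ satisfying every congruence — precisely the conclusion we want. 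The degree of $\prod_i g_i$ is at most $\sum_{i=1}^n\deg(f_i)\cdot price(B_i) < m$ by hypothesis, so the top-degree part of $f$ comes solely from $-\prod_j(1-x_j)$, whose coefficient of $\prod_{j=1}^m x_j$ is $(-1)^{m+1}\neq 0$. Applying the Combinatorial Nullstellensatz over $\mathbb{F}=\mathbb{F}_p$ with $t_j=1$ and $S_j=\{0,1\}$ yields a point $\vecc{x}\in\{0,1\}^m$ with $f(\vecc{x})\neq 0$, which by the case analysis must be a nonzero vector with $f_i(\vecc{x})\bmod p^{d_i}\in Q_i$ for all $i$.

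The main obstacle — and the conceptual heart of the theorem — is establishing that the combinatorial notion of $price$ correctly controls the degree, i.e.\ that $price(B_i)$ is both well-defined (every $B\subseteq\mathbb{Z}_{p^{d}}$ admits some covering by integer-valued polynomials with $p\nmid h(0)$) and genuinely matches the Olson-type bound so that the final numerical inequality collapses to Theorem~\ref{olson}/\ref{alonolson} in the appropriate special cases. For well-definedness, I would note that a single element $b\neq 0$ can always be covered by a linear polynomial like $h(x)=x-b+1$ if $p\nmid(1-b)$, or more robustly by a suitable product over residues, and then take unions; one should check $0\notin B_i$ (which holds since $0\in Q_i$) so that the requirement $p\nmid h(0)$ is not vacuously obstructed. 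For the comparison with earlier results, I would verify that when $d_i=1$ one recovers $price(\mathbb{F}_p\setminus Q_i)=|\mathbb{F}_p\setminus Q_i|$ (using degree-one factors $q-x$), so Theorem~\ref{maintheorem} specializes to Corollary~\ref{combnullprimes}, and that for $Q_i=\{0\}$ an explicit optimal covering of $\mathbb{Z}_{p^{d_i}}\setminus\{0\}$ of total degree $p^{d_i}-1$ exists, recovering Olson's theorem. These verifications are where the real work lies; the Nullstellensatz application itself is then routine.
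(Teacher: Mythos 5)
There is a genuine gap at the central step of your argument: the object $g_i(\vecc{x})=\prod_{h\in\mathcal{H}_i} h\bigl(f_i(\vecc{x})\bigr)\bmod p$ is not well defined. The covering polynomials $h$ of Definition \ref{covering} are only \emph{integer-valued} elements of $\mathbb{Q}[x]$; their coefficients typically have denominators that are powers of the very prime $p$ you want to reduce by (this is the whole point --- see Equation (\ref{eq:alonconstr}), or simply $h(T)=\tbinom{T}{2}$ with $p=2$). The formal composition $h(f_i(\vecc{x}))$ then has genuinely non-integral coefficients: for $f=x_1+x_2$ and $h(T)=T(T-1)/2$ the coefficient of $x_1^2$ is $1/2$, so ``$\bmod\ 2$'' is meaningless for it, even though the polynomial is integer-valued at integer points. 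Passing instead to the multilinear function it induces on $\{0,1\}^m$ does not rescue the argument, because you would lose the degree bound $\deg(f_i)\cdot\sum_{h}\deg(h)$ on which the whole count rests. This is exactly the difficulty the paper's $\Psi^h$ construction exists to solve: write $h(x)=\sum_r\alpha_r\binom{x}{r}$ with $\alpha_r\in\mathbb{Z}$ (Gregory--Newton), list the monomials of $f_i$ with multiplicity as $p_1,\dots,p_k$, each with coefficient $1$, and replace $\binom{f_i}{r}$ by the elementary symmetric polynomial $\Psi_r(f_i)=\sum_{i_1<\dots<i_r}p_{i_1}\cdots p_{i_r}$, which has integer coefficients, degree at most $r\cdot\deg(f_i)$, and equals $\binom{f_i(\vecc{s})}{r}$ for every $\vecc{s}\in\{0,1\}^m$ since each $p_i(\vecc{s})\in\{0,1\}$ (Lemma \ref{psi}). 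With $\Psi^h(f_i)=\sum_r\alpha_r\Psi_r(f_i)$ in place of your $h(f_i(\vecc{x}))$, the remainder of your argument --- the master polynomial, the degree count, the nonvanishing at $\vecc{0}$ from $p\nmid h(0)$, and the Nullstellensatz with $S_j=\{0,1\}$ --- is precisely the paper's proof.

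The places where you locate ``the real work'' are in fact the easy parts: $price(B)$ is always finite because the polynomial of Equation (\ref{eq:alonconstr}) with $Q'=\{0\}$ covers all of $\mathbb{Z}_{p^d}\setminus\{0\}\supseteq B$, and the degree drop under $x_j^2=x_j$ is harmless since the Nullstellensatz is applied to the unreduced polynomial, whose top-degree term comes from $\prod_j(1-x_j)$ alone. The one missing idea is how to turn $h\circ f_i$ into an honest integer-coefficient (hence $\mathbb{F}_p$-) polynomial of controlled degree that agrees with it on the Boolean cube.
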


We will prove this theorem in Section \ref{sec:proof}. It is easy to check that Theorem \ref{maintheorem} implies Corollary \ref{combnullprimes}:  let $d=1$, so $0\in Q \subseteq \mathbb{F}_p$. Then, $\{h(x)=x-q : q \not\in Q \}$ covers $\mathbb{F}_{p} \backslash Q$ with price $|\mathbb{F}_{p} \backslash Q|$.

Theorem \ref{olson} and Theorem \ref{alonolson} will follow from Theorem \ref{maintheorem} via the following general estimation for $F(\mathbf{d},\mathbf{Q})$ which we will prove in Section \ref{sec:olson}. 

\begin{theo} \label{genolsontheorem}
$F(\mathbf{d},\mathbf{Q})\leq \sum_{i=1}^{n} price( \mathbb{Z}_{p^{d_i}} \backslash Q_i)$.
\end{theo}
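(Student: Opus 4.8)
The plan is to derive Theorem \ref{genolsontheorem} directly from Theorem \ref{maintheorem} by choosing the right polynomials. Given arbitrary integers $a_{ij}$, set $f_i(\vecc{x}) = \sum_{j=1}^m a_{ij} x_j$ for each $i = 1, \dots, n$. These are $m$-variable integer polynomials without constant terms, and each has $\deg(f_i) \le 1$ (if some row is identically zero, that $f_i$ contributes nothing and the condition is vacuous, so we may assume $\deg(f_i) = 1$). Apply Theorem \ref{maintheorem} with the sets $Q_i \subseteq \mathbb{Z}_{p^{d_i}}$: whenever $m > \sum_{i=1}^n \deg(f_i) \cdot price(\mathbb{Z}_{p^{d_i}} \backslash Q_i) = \sum_{i=1}^n price(\mathbb{Z}_{p^{d_i}} \backslash Q_i)$, there exists $\vecc{0} \ne \vecc{x} \in \{0,1\}^m$ with $f_i(\vecc{x}) \equiv q_i \pmod{p^{d_i}}$ for some $q_i \in Q_i$, for every $i$.

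It then remains to translate this into the set language of condition (\ref{eq:cong}). Take $J = \{ j : x_j = 1 \}$; since $\vecc{x} \ne \vecc{0}$, $J$ is a nonempty subset of $\{1, \dots, m\}$, and $f_i(\vecc{x}) = \sum_{j \in J} a_{ij}$. Hence $\sum_{j \in J} a_{ij} \equiv q_i \pmod{p^{d_i}}$ for some $q_i \in Q_i$ for every $i = 1, \dots, n$, which is exactly (\ref{eq:cong}). Since this holds for every $m > \sum_{i=1}^n price(\mathbb{Z}_{p^{d_i}} \backslash Q_i)$ and for arbitrary $a_{ij}$, by the definition of $F(\mathbf{d}, \mathbf{Q})$ as the \emph{minimum} such threshold, we conclude $F(\mathbf{d}, \mathbf{Q}) \le \sum_{i=1}^n price(\mathbb{Z}_{p^{d_i}} \backslash Q_i)$.

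Since the whole argument is just an instantiation of Theorem \ref{maintheorem}, there is essentially no obstacle here — the only point requiring a word of care is the degenerate case where $a_{ij} \equiv 0$ for all $j$ (equivalently $f_i$ is the zero polynomial, so $\deg(f_i)$ is not $1$): then one must check that $0 \in Q_i$ guarantees condition (\ref{eq:cong}) is automatically satisfied for that $i$ by any $J$, so dropping such indices from the sum does not hurt the bound. All the substantive work — in particular the dependence on $price(\cdot)$ — is hidden in Theorem \ref{maintheorem}, whose proof is deferred to Section \ref{sec:proof}. Finally, to recover Theorem \ref{olson} and Theorem \ref{alonolson} as corollaries, one checks $price(\mathbb{Z}_{p^d} \backslash \{0\}) = p^d - 1$ (cover each nonzero residue $b$ by the linear polynomial $x - b$, and observe no cheaper cover exists) and, more generally, $price(\mathbb{Z}_{p^d} \backslash Q) \le p^d - card_p(Q)$; I would include these two price estimates as short separate lemmas or remarks accompanying this proof.
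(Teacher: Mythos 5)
Your proof is correct and is essentially identical to the paper's own argument: instantiate Theorem \ref{maintheorem} with the linear forms $f_i(\vecc{x})=\sum_{j=1}^m a_{ij}x_j$ and read off $J=\{j : x_j=1\}$. The extra care you take with degenerate rows and the accompanying $price$ estimates are fine but not needed beyond what the paper already does.
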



In Section \ref{sec:kappa}, we will give a general and constructive bound for $price(B)$, which gives a strictly stronger estimation for $F(\mathbf{d},\mathbf{Q})$ than that one in Theorem \ref{alonolson}. We also show a wide class where this estimation is tight.

In the rest of the paper, we analyze the related computational questions. In Section \ref{sec:subgraphs}, we will prove that the $2^d$-divisible subgraph problem belongs to the complexity class Polynomial Parity Argument (PPA). We reduce the $2^d$-divisible subgraph problem to the search problem of the Combinatorial Nullstellensatz over $\mathbb{F}_2$ and in Section \ref{sec:combnull}, we verify  West's conjecture: the search problem of the Combinatorial Nullstellensatz over $\mathbb{F}_2$ is also in PPA, if the polynomial is given in a general form such as in most of the applications. In Section \ref{sec:louigi}, we focus on degree-constrained subgraphs modulo prime powers, and  we will prove an analogous theorem for Shirazi-Verstra\" ete theorem \cite{louigi}.

\pagebreak



\section{The proof of Theorem \ref{maintheorem}}
\label{sec:proof}

The proof of Theorem \ref{maintheorem} presented here is similar to the proof of Theorem \ref{alonolson} in \cite{reg}. Alon et al. used a similar polynomial to the one in Equation (\ref{eq:maintheorem}), 
however, they used only the special construction of Equation (\ref{eq:alonconstr}) instead of arbitrary polynomials.
Now we extend it to arbitrary integer-valued polynomials $h$ and we can use more than one polynomial at the same time.

We apply the following corollary of Gregory-Newton formula for integer-valued polynomials. \cite{integervalued}

\begin{theo}
Let $h(x)$ be an integer-valued polynomial in $\mathbb{Q}[x]$, namely, for every integer $T$, $h(T)$ is an integer. Then, $h(x)$ can be written as $\sum_{r=0}^d \alpha_r {x \choose r}$ where $\alpha_r \in \mathbb{Z}$.
\end{theo}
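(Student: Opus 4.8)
The plan is to use the calculus of finite differences. First I would introduce the forward difference operator $\Delta$, acting on polynomials by $(\Delta g)(x) = g(x+1) - g(x)$, and record two elementary facts: (i) if $g$ has degree $e \geq 1$, then $\Delta g$ has degree $e-1$, since the leading terms cancel; and (ii) by Pascal's identity, $\Delta \binom{x}{r} = \binom{x}{r-1}$ for $r \geq 1$ and $\Delta\binom{x}{0} = 0$ — this is an identity of polynomials because it holds at every integer. I would also note that the polynomials $\binom{x}{0}, \binom{x}{1}, \dots, \binom{x}{d}$ have pairwise distinct degrees $0, 1, \dots, d$, hence form a $\mathbb{Q}$-basis of the space of polynomials of degree at most $d$; so $h$ can certainly be written uniquely as $\sum_{r=0}^d \alpha_r \binom{x}{r}$ with $\alpha_r \in \mathbb{Q}$, and the content of the theorem is precisely that these coefficients are integers.

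I would then argue by induction on $d = \deg h$. For $d = 0$, $h$ is the constant $h(0) \in \mathbb{Z}$ and $\alpha_0 = h(0)$. For the inductive step, observe that $\Delta h$ is again integer-valued — it is a difference of integers at every integer point — and has degree $d-1$ by (i), so by the inductive hypothesis $\Delta h = \sum_{r=0}^{d-1} \beta_r \binom{x}{r}$ with every $\beta_r \in \mathbb{Z}$. Using (ii), the polynomial $g(x) := \sum_{r=0}^{d-1} \beta_r \binom{x}{r+1}$ satisfies $\Delta g = \Delta h$, hence $\Delta(h - g) = 0$; a polynomial with vanishing forward difference is constant (it takes the same value at every integer, and a nonconstant polynomial cannot), and that constant equals $(h-g)(0) = h(0) - 0 = h(0) \in \mathbb{Z}$. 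Therefore $h(x) = h(0)\binom{x}{0} + \sum_{r=0}^{d-1}\beta_r \binom{x}{r+1}$, an integer combination of $\binom{x}{0}, \dots, \binom{x}{d}$, which closes the induction.

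Alternatively, and without induction, one can extract the coefficients directly: from $h = \sum_{r=0}^d \alpha_r \binom{x}{r}$, applying $\Delta$ exactly $k$ times and using (ii) gives $\Delta^k h = \sum_{r=k}^d \alpha_r \binom{x}{r-k}$, and evaluating at $x = 0$ (where $\binom{0}{j}$ vanishes unless $j = 0$) yields $\alpha_k = (\Delta^k h)(0) = \sum_{j=0}^k (-1)^{k-j}\binom{k}{j} h(j)$, which is visibly an integer since each $h(j)$ is. I expect no genuine obstacle in this proof; the only points needing a sentence of care are the polynomial identity $\Delta\binom{x}{r} = \binom{x}{r-1}$ (deduced from Pascal's rule at integer points, hence valid identically) and the fact that $\Delta g = 0$ forces $g$ to be constant.
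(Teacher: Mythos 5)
Your argument is correct and complete: the paper itself gives no proof of this statement, citing it as a corollary of the Gregory--Newton formula from the reference on integer-valued polynomials, and your finite-difference induction (together with the explicit formula $\alpha_k=(\Delta^k h)(0)=\sum_{j=0}^k(-1)^{k-j}\binom{k}{j}h(j)$) is exactly the standard proof behind that cited result. All the delicate points are handled: $\Delta\binom{x}{r}=\binom{x}{r-1}$ as a polynomial identity, $\deg\Delta h=\deg h-1$, and the fact that $\Delta(h-g)=0$ forces $h-g$ to be the constant $h(0)\in\mathbb{Z}$.
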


In the following abstract definitions, one can think of the polynomial $f$ as the 'true meaning' of the problem such as $f_i$ in Corollary \ref{combnullprimes}, and 
one can think of the polynomial $h$ as the covering polynomial in Definition \ref{covering}.

The key idea is in the following observation. Although, $h(x)$ may have non-integral coefficients, we can construct a polynomial $\Psi^h(f)$ over $\mathbb{Z}$ that satisfies the equality $\Psi^h(f)(\vecc{s}) = h(f(\vecc{s}))$, if $\vecc{s}=(s_1,s_2,\dots,s_m) \in \{0,1\}^m$.
Since $\Psi^h(f)$ have integral coefficients, it can be considered over $\mathbb{F}_p$, so some information over $\mathbb{Z}$ -- and hence, modulo $p^d$ -- can be encoded over $\mathbb{F}_p$. 

 
\begin{de}
Let $f=\sum_{i=1}^{k} p_i$ be a polynomial over $\mathbb{Z}$, where each $p_i$ is a monomial with coefficient 1.
Let $\Psi_r(f)\in \mathbb{Z}[x_1,\dots,x_m]$ be the following polynomial:
\[
\Psi_r(f) = \sum_{1\leq i_1<i_2<\dots<i_r\leq k} p_{i_1} p_{i_2} \dots p_{i_r}
\]
\end{de}

The degree of the constructed polynomial $\Psi_r(f)$ is at most $r \cdot \deg(f)$.
It is worth noting that if $\vecc{s}=(s_1,s_2,\dots,s_m) \in \{0,1\}^m$, then the possible values of $p_i(\vecc{s})$ are 0 and 1, so if the number of $p_i$s such that $p_i(\vecc{s})=1$ is $c$, then the number of terms in $\Psi_r(f)$ that are 1 at $\vecc{s}$ is precisely ${c \choose r}$.

\begin{de}
Let $f=\sum_{i=1}^{k} p_i$ be a polynomial over $\mathbb{Z}$ as above. Let $h(x)=\sum_{r=0}^d \alpha_r {x \choose r}$ be an integer-valued polynomial in $\mathbb{Q}[x]$ where $\alpha_r \in \mathbb{Z}$.
Let $\Psi^h(f)\in \mathbb{Z}[x_1,\dots,x_m]$ be the following polynomial:
\[
\Psi^h(f) = \sum_{r=0}^d \alpha_r \Psi_r(f)
\]
\end{de}

Note that, $\deg(\Psi^h(f)) \leq deg(h) \cdot \deg(f)$. In the following lemma, we can obtain the benefit of these definitions: $h(f(\vecc{x}))$ can be written as a polynomial with integer coefficients.

\begin{lemma} \label{psi}
Let $f=\sum_{i=1}^{k} p_i$ be a polynomial over $\mathbb{Z}$ as above.  Let $h(x)=\sum_{r=0}^d \alpha_r {x \choose r}$ be an integer-valued polynomial as above.
Further, let $\vecc{s}=(s_1,s_2,\dots,s_m) \in \{0,1\}^m$.

Then,
\[
\Psi^h(f)(\vecc{s}) = h(f(\vecc{s}))
\]
\end{lemma}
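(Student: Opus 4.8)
The plan is to evaluate $\Psi^h(f)$ at a $0/1$ point directly from the definitions and compare it with $h(f(\vecc{s}))$ written out via its $\binom{x}{r}$-expansion. Write $f = \sum_{i=1}^k p_i$ as in the hypothesis, and fix $\vecc{s}\in\{0,1\}^m$. The first step is the key numerical observation already highlighted in the text: since each $p_i$ is a monomial with coefficient $1$, we have $p_i(\vecc{s})\in\{0,1\}$, so if we set $c = \#\{\, i : p_i(\vecc{s}) = 1\,\}$ then $f(\vecc{s}) = \sum_{i=1}^k p_i(\vecc{s}) = c$.

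Next I would compute $\Psi_r(f)(\vecc{s})$. By definition $\Psi_r(f) = \sum_{i_1 < \dots < i_r} p_{i_1}\cdots p_{i_r}$, and a product $p_{i_1}(\vecc{s})\cdots p_{i_r}(\vecc{s})$ equals $1$ exactly when all $r$ of the indices $i_1,\dots,i_r$ lie in the set $\{\,i : p_i(\vecc{s})=1\,\}$, and equals $0$ otherwise. Hence the number of surviving terms is the number of $r$-element subsets of a $c$-element set, i.e. $\Psi_r(f)(\vecc{s}) = \binom{c}{r}$. (This uses the standard convention $\binom{c}{r} = 0$ for $r > c$, which is consistent with the polynomial identity $\binom{x}{r}$ vanishing at $x = 0,1,\dots,r-1$.)

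Now assemble: using $h(x) = \sum_{r=0}^d \alpha_r \binom{x}{r}$ and the definition $\Psi^h(f) = \sum_{r=0}^d \alpha_r \Psi_r(f)$, we get
\[
\Psi^h(f)(\vecc{s}) \;=\; \sum_{r=0}^d \alpha_r\, \Psi_r(f)(\vecc{s}) \;=\; \sum_{r=0}^d \alpha_r \binom{c}{r} \;=\; h(c) \;=\; h(f(\vecc{s})),
\]
where the last two equalities use $\binom{c}{r} = \binom{x}{r}\big|_{x=c}$ and $f(\vecc{s}) = c$. This is exactly the claimed identity.

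I do not expect any serious obstacle here: the statement is essentially a bookkeeping lemma, and the only point requiring a modicum of care is the identification $\Psi_r(f)(\vecc{s}) = \binom{c}{r}$ together with the edge cases $r > c$ (where both sides are $0$) and $r = 0$ (where both sides are $1$, matching the empty product convention in the definition of $\Psi_r$). Everything else is a direct substitution and linearity in $r$.
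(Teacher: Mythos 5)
Your proposal is correct and follows the same argument as the paper: identify $c=f(\vecc{s})$ as the count of monomials equal to $1$ at $\vecc{s}$, observe $\Psi_r(f)(\vecc{s})=\binom{c}{r}$, and conclude by linearity over $r$. The extra remarks on the edge cases $r>c$ and $r=0$ are fine but not needed beyond what the paper already states.
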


\begin{proof}
Let $c = f(\vecc{s})$. Then, the number of terms in $\Psi_r(f)$ that are 1 at $\vecc{s}$ is precisely ${c \choose r}$, the other terms are 0. So 
\[
\Psi^h(f)(\vecc{s}) = \sum_{r=0}^d \alpha_r \Psi_r(f)(\vecc{s}) = \sum_{r=0}^d \alpha_r {c \choose r} = h(c) = h(f(\vecc{s})). \qquad
\]
\end{proof}

Now we are ready to prove Theorem \ref{maintheorem}.

\begin{proof}[Proof of Theorem \ref{maintheorem}]
To simplify the notation, let $C_i$ be the complementary set of $Q_i$, that is, $\mathbb{Z}_{p^{d_i}} \backslash Q_i$. Due to the definitions, there exists a set of polynomials $\mathcal{H}_i$ which covers $C_i$ with the total degree $price(C_i)$.
Let us consider the following polynomial in $\mathbb{F}_p[x_1,\dots,x_m]$:
\[
f(\vecc{x}) = \prod_{i=1}^n \prod_{ h \in \mathcal{H}_i} \Psi^h\left( f_i(\vecc{x}) \right) - c \cdot \prod_{j=1}^m (1-x_j), \label{eq:maintheorem} \tag{1}
\]

where $c$ is a nonzero constant to be defined later.

The degree of the first part of the polynomial is $\sum_{i=1}^n \left(\deg(f_i) \cdot \sum_{h \in \mathcal{H}_i} deg(h) \right) = \sum_{i=1}^n \deg(f_i) \cdot price(C_i) < m$, so the degree of the polynomial $f$ is $m$, and the coefficient of $x_1x_2\dots x_m$ is $-c \cdot (-1)^m \neq 0$.

If $\vecc{x} = \vecc{0}$, then the first part is nonzero, because $h(0)$ is not divisible by $p$ for every $h\in \mathcal{H}_i$. Let $c$ be the value of the first part at $\vecc{0}$. So, $f(\vecc{0})=c-c=0$. Let $t_j = 1, S_j = \{0,1\}$. Then, the conditions of Combinatorial  Nullstellensatz hold, so there exists an $\vecc{s}\in \{0,1\}^m$ such that $f(\vecc{s})\neq 0$. For this $\vecc{s}\in \{0,1\}^m$, at least one component of $\vecc{s}$ is 1 due to $f(\vecc{0})=0$, so the second part of the polynomial $f$ is zero, and hence, the first part must be nonzero at vector $\vecc{s}$. This means that $f_i(\vecc{s})$ is not covered by any $h\in \mathcal{H}_i$, so it is not $C_i$, hence it must be in $Q_i$. So, $f_i(\vecc{s}) \equiv q_i \qquad \modu{p^{d_i}} \quad \textnormal{ for some }q_i \in Q_i \textnormal{ for every }i=1,\dots ,n$, completing the proof.
\end{proof}

\section{The generalization of Olson's theorem: estimation for $F(\mathbf{d},\mathbf{Q})$} \label{sec:olson}

Let us now derive Theorem \ref{genolsontheorem} from Theorem \ref{maintheorem}.




\begin{proof}[Proof of Theorem \ref{genolsontheorem}]
Let $f_i(x_1,\ldots,x_m)=\sum_{j=1} a_{ij}x_j$ and $m > \sum_{i=1}^{n} price( \mathbb{Z}_{p^{d_i}} \backslash Q_i)$. Applying Theorem \ref{maintheorem}, there exists a vector $\vecc{0}\neq \vecc{x}\in \{0,1\}^m$ such that $f_i(\vecc{x}) \equiv q_i \qquad \modu{p^{d_i}} \quad \textnormal{ for some }q_i \in Q_i \textnormal{ for all }i$.

Let $J= \{j : x_j=1 \}$. Then,
\[
\sum_{j\in J} a_{ij} = f_i(\vecc{x}) \equiv q_i \qquad \modu{p^{d_i}} \quad \textnormal{ for some }q_i \in Q_i \textnormal{ for every }i=1,\dots ,n.
\]

Hence, $F(\mathbf{d},\mathbf{Q})\leq \sum_{i=1}^{n} price( \mathbb{Z}_{p^{d_i}} \backslash Q_i)$.
\end{proof}

We are ready to show that Theorem \ref{genolsontheorem} implies Theorem \ref{alonolson} and its special case, Theorem \ref{olson}.




Let $d$ be arbitrary, and $0\in Q' \subseteq \mathbb{Z}_{p^d}$ be a set of distinct integers modulo $p$. Then, let
\[
h(T):=\frac{1}{p^\delta}\prod_{q\not \in Q'}( T-q)\textnormal{, where }\delta = \sum_{r=0}^{d-1} (p^r-1). \label{eq:alonconstr} \tag{2}
\]

For every integer $T$, in the product $\prod_{q\not \in Q'}( T-q)$, at least $p^{d-r}-1$ numbers are divisible by $p^r$ for every $1\leq r \leq d$. Hence $h(T)$ is an integer-valued polynomial. Further, $h(T)$ is not divisible by $p$ if and only if no factor is divisible by $p^{d}$. So $h(T)$ is divisible by $p$ if and only if $T\equiv q\quad \modu{p^d}$ for some $q\not \in Q'$.
Hence, $h(0)$ is not divisible by $p$ and $h(T)$ covers $\mathbb{Z}_{p^d} \backslash Q'$ with price $deg(h)=p^d-|Q'|$.

This implies that if $0\in Q$ is an arbitrary subset of $\mathbb{Z}_{p^d}$, $price(\mathbb{Z}_{p^d} \backslash Q) \leq p^d -  card_p(Q) $, so Theorem \ref{alonolson} follows from Theorem \ref{genolsontheorem}.

\label{sec:kappa}


Furthermore, Theorem \ref{genolsontheorem} enables to obtain strictly stronger bounds than the one in Theorem \ref{alonolson} via the following general constructive estimation on $price(B)$.

\begin{de} \label{kappa}
For set $B$ of integers modulo $p$, let $\kappa(B)=|B|$. For any $d>1$ and for set $B$ of integers modulo $p^d$, let us define $k$ as the cardinality of the set $\{b\in B : b\textnormal{ is divisible by }p^{d-1}\}$ and $\hat{B}$ as the set of such residues modulo $p^{d-1}$ that appear in $B$ more than $k$ times. Then, let $\kappa(B)=k\cdot p^{d-1}+\kappa(\hat{B})$.
\end{de}

The following definition for $\kappa$ is equivalent to the above one. It gives a way to compute the value of $\kappa(B)$. Let $B$ be a set of integers modulo $p^d$. Now, we define integers $k_{d-1},\dots,k_{0}$ and sets $B_{d},\dots,B_{1}$ such that $B_r$ is a set of integers modulo $p^r$. Let $B_{d}=B$ and $k_{d-1}=|\{b\in B : b\textnormal{ is divisible by }p^{d-1}\}|$. Then, for $r=d-1,\ldots,2, 1$, if $B_{r+1}$ is given, let $B_{r}$ be the set of such residues modulo $p^r$ that appear in $B_{r+1}$ more than $k_{r}$ times and let $k_{r-1} = |\{b\in B_{r} : b\textnormal{ is divisible by }p^{r-1}\}|$. Then, $\kappa(B)=\sum_{r=0}^{d-1} k_r\cdot p^r$.

\begin{example}
Let $p^d=5^3=125$, $B=\{1,2,5,6,13,20,40,42,50,51,52,56,69,70,87,95,100,101,102,112\}$.
Then, $k_2=2$, because two integers in $B$ are divisible by 25 (50 and 100). Then, $B_{2}=\{1,2,12,20\}$. For instance, $20 \in B_2$, because $20\equiv 70 \equiv 95 $ are in $B_3$, but $6 \not\in B_2$, because only $6\equiv 56$ are in $B_3$. So on, $k_1=1$, $B_{1}=\{2\}$, and $k_0=1$. Hence, $\kappa(B)=2\cdot 25 + 1\cdot 5 + 1 \cdot 1 = 56$.
\end{example}

\begin{theo} \label{kappabound}
With the above definition, $price(B) \leq \kappa(B)$ holds.
\end{theo}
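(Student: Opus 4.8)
The plan is to prove $price(B)\le\kappa(B)$ by induction on $d$, building the covering family so that its structure follows the recursion defining $\kappa$. We may assume $0\notin B$ (otherwise no covering family exists, and $0\notin B$ holds in all intended applications). For $d=1$ the family $\{x-b:b\in B\}$ covers $B$: each factor has degree $1$ and is nonzero modulo $p$ at $0$ because $b\not\equiv 0$, so $price(B)\le|B|=\kappa(B)$. For the inductive step let $k=\bigl|\{b\in B:p^{d-1}\mid b\}\bigr|$ and let $\hat B\subseteq\mathbb{Z}_{p^{d-1}}$ be the set of residues occurring more than $k$ times in $B$, so $\kappa(B)=k\,p^{d-1}+\kappa(\hat B)$; note that $0$ occurs exactly $k$ times, hence $0\notin\hat B$, and that $\kappa$ of any set of residues modulo $p^{d-1}$ is at most $p^{d-1}-1$. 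Split $B=B'\sqcup B''$, where $B'=\{b\in B:(b\bmod p^{d-1})\in\hat B\}$; I will cover $B'$ with total degree at most $\kappa(\hat B)$ and $B''$ with total degree at most $k\,p^{d-1}$.

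The one technical input is that an integer-valued $g\in\mathbb{Q}[x]$ with $\deg g<p^{e}$ has the property that $g(x)\bmod p$ depends only on $x\bmod p^{e}$: writing $g=\sum_{r<p^{e}}\alpha_r\binom{x}{r}$ with $\alpha_r\in\mathbb{Z}$, this follows from $\binom{x+p^{e}}{r}-\binom{x}{r}=\sum_{j=1}^{r}\binom{p^{e}}{j}\binom{x}{r-j}$ together with $p\mid\binom{p^{e}}{j}$ for $1\le j<p^{e}$. By the inductive hypothesis there is a family $\mathcal{G}$ covering $\hat B$ with $\sum_{g\in\mathcal{G}}\deg g=price(\hat B)\le\kappa(\hat B)\le p^{d-1}-1$; in particular each $g\in\mathcal{G}$ has degree $<p^{d-1}$, so $g(x)\bmod p$ depends only on $x\bmod p^{d-1}$, the condition $g(0)\not\equiv 0\pmod p$ still holds, and for every $b\in B'$ some $g\in\mathcal{G}$ satisfies $p\mid g(b)$. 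Hence $\mathcal{G}$ covers $B'$ with total degree at most $\kappa(\hat B)$.

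Next I cover $B''$. Every residue class modulo $p^{d-1}$ meeting $B''$ meets it in at most $k$ points (otherwise it would lie in $\hat B$), so choosing an order within each such class and collecting the ``$s$-th points'' for $s=1,\dots,k$ gives a partition $B''=S_1\sqcup\dots\sqcup S_k$ in which the points of each $S_s$ are pairwise incongruent modulo $p^{d-1}$, $0\notin S_s$, and $S_s$ contains at most one multiple of $p^{d-1}$. I claim $price(S_s)\le p^{d-1}$. Writing $x=\sum_{i\ge0}x_ip^i$ in base $p$: the reductions modulo $p$ of $\binom{x}{r}$ for $0\le r<p^{d-1}$ are functions of $(x_0,\dots,x_{d-2})$ and, being $p^{d-1}$ linearly independent ones, span all such functions, while $\binom{x}{p^{d-1}}$ reduces modulo $p$ to the digit $x_{d-1}$ (Lucas); hence for any choice of $g$ and $\beta\in\mathbb{F}_p$ there is an integer-valued polynomial of degree $\le p^{d-1}$ with modulo-$p$ reduction $g(x_0,\dots,x_{d-2})+\beta\,x_{d-1}$. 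Since the points of $S_s$ are distinct modulo $p^{d-1}$, we may prescribe $g$ freely on their lower digit strings; taking $\beta=1$ and $g(b_0,\dots,b_{d-2})\equiv-b_{d-1}$ for $b\in S_s$ makes this polynomial vanish modulo $p$ on all of $S_s$, and if $b=c\,p^{d-1}\in S_s$ with $1\le c\le p-1$ (its lower digits are all $0$) the same recipe forces $g(0,\dots,0)\equiv-c$, so the value at $0$ is $-c\not\equiv 0\pmod p$; if $S_s$ contains no multiple of $p^{d-1}$, set $g(0,\dots,0)=1$ instead. Thus $price(S_s)\le p^{d-1}$, so $price(B'')\le k\,p^{d-1}$, and altogether $price(B)\le k\,p^{d-1}+\kappa(\hat B)=\kappa(B)$.

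I expect the main obstacle to be this last step: covering a whole ``layer'' $S_s$ with a single low-degree polynomial. Treating the (unique) multiple of $p^{d-1}$ in $S_s$ by a separate polynomial is too wasteful --- any polynomial that separates $c\,p^{d-1}$ from $0$ modulo $p$ already requires degree $p^{d-1}$ --- and the resolution is to spend precisely the degree-$p^{d-1}$ polynomial $\binom{x}{p^{d-1}}$, whose modulo-$p$ reduction isolates the top base-$p$ digit, while the remaining low-degree freedom interpolates in the lower digits. The ingredients (periodicity of low-degree integer-valued polynomials modulo $p$, the unitriangular basis $\{\binom{x}{r}\}$ of functions on $\mathbb{Z}/p^{e}$, and $\binom{x}{p^{d-1}}\equiv x_{d-1}\pmod p$) are classical, but they must be assembled with care, and one must keep track throughout of which elements of $B$ --- and of the sets produced by the recursion --- are divisible by large powers of $p$.
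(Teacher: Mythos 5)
Your proof is correct, and its inductive skeleton is forced to coincide with the paper's (both peel off $k$ ``layers'', each consisting of at most one element per residue class modulo $p^{d-1}$ and one nonzero multiple of $p^{d-1}$, spend degree $p^{d-1}$ on each layer, and recurse on $\hat B$). Where you genuinely diverge is in how the degree-$p^{d-1}$ covering polynomial for a layer is produced. The paper takes the explicit normalized product $h(T)=p^{-\delta}\prod_{i=1}^{p^{d-1}}(T-q_i)$ over a complete $p^{d-1}$-residue system $\{q_i\}$ made up of the chosen layer representatives, and checks by counting $p$-adic valuations that $p\mid h(T)$ exactly when $T\equiv q_i\pmod{p^{d}}$ for some $i$; this is short, elementary, and needs no structure theory of integer-valued polynomials beyond their definition. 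You instead work in the binomial basis, use the unitriangularity of $\bigl(\binom{x}{r}\bigr)$ to interpolate an arbitrary $\mathbb{F}_p$-valued function of the low base-$p$ digits, and add $\binom{x}{p^{d-1}}\equiv x_{d-1}\pmod p$ (Lucas) to isolate the top digit. Your route costs two extra classical inputs (the periodicity of low-degree integer-valued polynomials modulo $p$, and Lucas' theorem) but buys more: it characterizes exactly which functions $\mathbb{Z}_{p^{d}}\to\mathbb{F}_p$ are realizable by integer-valued polynomials of degree $\le p^{d-1}$, which both explains why degree $p^{d-1}$ is unavoidable for separating $cp^{d-1}$ from $0$ and makes the lifting step for $\hat B$ (that a covering of $\hat B\subseteq\mathbb{Z}_{p^{d-1}}$ automatically covers $B'$ because its polynomials have degree $<p^{d-1}$) completely transparent --- a point the paper's proof passes over silently. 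Your explicit handling of $0\notin B$, $0\notin\hat B$, and the placement of the $k$ nonzero multiples of $p^{d-1}$ one per layer is also more careful than the published argument.
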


\begin{proof}
We construct a polynomial that covers a complete $p^r$-residue system modulo $p^{r+1}$ with price $p^r$. 

Let $q_1,q_2,\dots,q_{p^r}$ be a complete $p^r$-residue system and let
\[
h(T):=\frac{1}{p^\delta}\prod_{i=1}^{p^r}( T-q_i)\textnormal{, where }\delta = \sum_{j=0}^{r-1} p^{j}.
\]

For every integer $T$, the integers $T-q_1,T-q_2,\dots,T-q_{p^r}$ also form a complete residue system modulo $p^r$, so in the product $\prod_{i_=1}^{p^r}( T-q_i)$, $p^{r-j}$ factors are divisible by $p^j$ for every $1\leq j \leq r$. Hence, the product is divisible by $p^\delta$ and $h(T)$ is an integer-valued polynomial.

Further, $h(T)$ is divisible by $p$ if and only if the factor which is divisible by $p^r$ is also divisible by $p^{r+1}$. This means $T \equiv q_i \quad \modu{p^{r+1}}$ for some $i$, that is, $h(T)$ covers $q_1,q_2,\dots,q_{p^r}$ modulo $p^{r+1}$, precisely, it covers the set $\{q \in \mathbb{Z}_{p^d} : q \equiv q_i \quad \modu{p^{r+1}}\textnormal{ for some }i \}$ with price $p^r$. 

Then, by Definition \ref{kappa}, the statement immediately follows: one can cover the integers that are divisible by $p^{d-1}$ with $k$ such conditions. These conditions also covers other residues $k$ times, so such residues are not covered by the conditions that appear more than $k$ times. These remaining residues are in $\hat{B}$ modulo $p^{d-1}$ and they can be covered with $\kappa(\hat{B})$.
\end{proof}

\subsection{A special case when Theorem \ref{genolsontheorem} is tight}

Here we show a special case when the theorem is tight. This statement shows a wide class where Theorem \ref{genolsontheorem} and hence Theorem \ref{maintheorem} give tight estimation. In general, tightness is not yet known. This result also shows cases when Theorem \ref{alonolson} gives strictly weaker estimation than the one in  Theorem \ref{genolsontheorem}.

\begin{de} \label{rzeroset}
Let $R$ be a subset of $\{ 0,1,\dots,d-1\}$. Let us define the set $\Omega \subseteq \mathbb{Z}_{p^{d}}$ by the following property: $c \in \Omega$ if and only if $c^{(r)}=0$ for every $r\in R$ in the $c^{(d-1)}\dots c^{(1)}c^{(0)}$ form of $c$ in base $p$. We call $\Omega$ the $R$-zero set modulo $p^d$. Let $\sigma(R)= (p-1)\sum_{r \in R} p^r$.
\end{de}

\begin{theo} \label{rzerolson}
Let $R_i$ be an arbitrary subset of $\{ 0,1,\dots,d_i-1\}$ and denote the $R_i$-zero set modulo $p^{d_i}$ by  $\Omega_i$. Then, $F(\mathbf{d},\mathbf{\Omega}) = \sum_{i=1}^{n} \sigma(R_i)$.
\end{theo}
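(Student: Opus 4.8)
The plan is to establish the upper bound and the matching lower bound separately. For the upper bound $F(\mathbf{d},\mathbf{\Omega}) \leq \sum_{i=1}^n \sigma(R_i)$, by Theorem \ref{genolsontheorem} it suffices to show that $price(\mathbb{Z}_{p^{d_i}} \setminus \Omega_i) \leq \sigma(R_i)$ for each $i$. Fix $i$ and write $d = d_i$, $R = R_i$, $\Omega = \Omega_i$. The complement $\mathbb{Z}_{p^d} \setminus \Omega$ consists of those $c$ whose base-$p$ digit $c^{(r)}$ is nonzero for at least one $r \in R$. I would cover this complement as follows: for each $r \in R$, the set $\{c : c^{(r)} \neq 0\}$ can be written as a union over the $p-1$ nonzero digit values $a$ of the sets $\{c : c^{(r)} = a\}$, and each such set is a union of residue classes modulo $p^{r+1}$, namely it consists of all $c$ congruent mod $p^{r+1}$ to something with $r$-th digit equal to $a$. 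Using the covering polynomial built in the proof of Theorem \ref{kappabound} — which covers a complete $p^r$-residue system modulo $p^{r+1}$ with a single integer-valued polynomial of degree $p^r$ and with $p \nmid h(0)$ (one must shift the $q_i$'s so that $0$ is not among the covered residues, i.e. choose the residue system so that the covered class modulo $p^{r+1}$ is the one with $r$-th digit $a \neq 0$, which automatically excludes $0$) — we get, for each $r \in R$ and each nonzero digit value $a$, one polynomial of degree $p^r$. Summing, $\mathbb{Z}_{p^d}\setminus\Omega$ is covered with total degree $\sum_{r\in R}(p-1)p^r = \sigma(R)$, all covering polynomials having $p \nmid h(0)$ since $0 \in \Omega$. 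This gives $price(\mathbb{Z}_{p^d}\setminus\Omega)\le\sigma(R)$ and hence the upper bound.

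For the lower bound $F(\mathbf{d},\mathbf{\Omega}) \geq \sum_{i=1}^n \sigma(R_i)$, I must exhibit, for $m = \sum_{i=1}^n \sigma(R_i)$, a family of integers $a_{ij}$ so that \emph{no} nonempty $J \subseteq \{1,\dots,m\}$ satisfies $\sum_{j\in J} a_{ij} \equiv q_i \pmod{p^{d_i}}$ with $q_i \in \Omega_i$ for all $i$. The natural construction is a "direct sum" over $i$ and over the pairs $(r,a)$ with $r \in R_i$: partition the $m$ columns into blocks, one block of size $(p-1)p^r$ for each $i$ and each $r \in R_i$, where within the block for $(i,r)$ we place the value $p^r$ on rows $i$, exactly $(p-1)p^r$ times (in the copies of Olson's tight example, generalized). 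Concretely I expect that the right gadget is: for row $i$, take the coefficients to be the concatenation, over $r \in R_i$, of $(p-1)$ copies of each of the "staircase" sequences that force the partial sum modulo $p^{d_i}$ to have a prescribed nonzero digit in position $r$; this mirrors exactly how the extremal example for Olson's theorem (Theorem \ref{olson}) is assembled from powers of $p$. The verification amounts to: any nonempty $J$ restricted to the block for $(i,r)$ contributes, modulo $p^{r+1}$, a value whose $r$-th digit is forced to be nonzero whenever $J$ meets that block, and choosing $J$ to avoid all blocks is impossible since $J \neq \emptyset$ and the blocks partition the columns; more carefully one argues digit by digit that whatever $J$ one picks, some row $i$ ends up with $\sum_{j\in J} a_{ij}$ having a nonzero digit in some position $r \in R_i$, i.e. lying outside $\Omega_i$.

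The main obstacle I anticipate is the lower bound, specifically getting the block gadget to interact correctly across different positions $r$ within the same row $i$: a subset $J$ that hits several blocks for the same $i$ produces carries when the partial sums are added, and one must ensure these carries cannot "repair" a nonzero digit in a position $r \in R_i$ back to zero. I would handle this by ordering the blocks for row $i$ by increasing $r$ and using the standard trick that the block for position $r$ contributes only to digits $\geq r$, combined with an inductive argument from the lowest relevant digit upward (exactly the structure already visible in the alternative recursive definition of $\kappa$ in Definition \ref{kappa} and its use in Theorem \ref{kappabound}); the digit in position $r$ of $\sum_{j\in J}a_{ij}$ is determined by $J$'s intersection with blocks for positions $\le r$ together with an incoming carry, and the block for $r$ can be calibrated (using $p-1$ copies, analogous to the $p^r - 1$ count in Olson's bound) so that no choice of that intersection makes digit $r$ vanish once digit $r$ is "activated." Since $0 \in \Omega_i$ corresponds to $J = \emptyset$, which is excluded, the construction has no valid nonempty $J$, giving $F(\mathbf{d},\mathbf{\Omega}) > m - 1$, i.e. the matching lower bound. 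Combining the two bounds yields equality.
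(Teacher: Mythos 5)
Your upper bound is fine and is essentially the paper's argument in different packaging: instead of computing $\kappa(\mathbb{Z}_{p^{d}}\setminus\Omega)=\sigma(R)$ and invoking Theorem \ref{kappabound}, you build the cover directly, one polynomial of degree $p^r$ for each $r\in R$ and each nonzero digit value $a$, using exactly the covering polynomials from the proof of Theorem \ref{kappabound}; since $a\neq 0$ the covered class avoids $0$, so $p\nmid h(0)$, and the total degree is $\sum_{r\in R}(p-1)p^r=\sigma(R)$. That half is correct.

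The lower bound, however, has a genuine gap, and the concrete gadget you propose is wrong. Take $p=2$, $d=2$, $R=\{1\}$, so $\sigma(R)=2$ and your block for $(i,r)=(i,1)$ consists of two columns each carrying the value $p^r=2$ in row $i$. Choosing $J$ to be both columns gives $\sum_{j\in J}a_{ij}=4\equiv 0\pmod 4$, which lies in the $\{1\}$-zero set $\Omega_i$ — so a valid nonempty $J$ exists and the construction does not certify the lower bound. This is precisely the carry problem you flag as "the main obstacle," but you never resolve it: the "calibration" and digit-by-digit induction are only gestured at, and as the example shows the calibration $(p-1)p^r$ copies of $p^r$ is not the right one. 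The paper avoids all of this with a much simpler extremal example: put $a_{ij}=-1$ on a single block of $\sigma(R_i)$ columns for row $i$ (and $0$ elsewhere). Then for any nonempty $J$ meeting block $i$, the row sum is some value in $\{-\sigma(R_i),\dots,-1\}$, and since the largest nonzero element of $\Omega_i$ in $\{0,\dots,p^{d_i}-1\}$ is $\sum_{r\notin R_i}(p-1)p^r=p^{d_i}-1-\sigma(R_i)\equiv-\sigma(R_i)-1\pmod{p^{d_i}}$, no element of that range lies in $\Omega_i$. You would need either to adopt such a one-block construction or to supply an actual corrected multi-block gadget with a full carry analysis; as written, the lower bound is not proved.
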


\begin{proof}
We show that $\sigma(R)=\kappa(\mathbb{Z}_{p^{d}} \backslash \Omega)$ and hence $F(\mathbf{d},\mathbf{\Omega}) \leq \sum_{i=1}^{n} \sigma(R_i)$ by Theorem \ref{genolsontheorem} and Theorem \ref{kappabound}.

The proof is by induction on $d$. Let $B = \mathbb{Z}_{p^{d}} \backslash \Omega$. Further, let $d'=d-1$ and let $\Omega'$ be the $R'=R\backslash \{d-1\}$-zero set modulo $p^{d'}$ and $B' = \mathbb{Z}_{p^{d'}} \backslash \Omega'$. By induction, $\sigma(R')=\kappa(B')$.

If $d-1 \not\in R$, then $k=|\{b\in B : b\textnormal{ is divisible by }p^{d-1}\}|=0$, and $\hat{B}=B'$. Then $\kappa(B)=0 + \kappa(B')=\sigma(R')=\sigma(R)$.

If $d-1 \in R$, then $k=|\{b\in B : b\textnormal{ is divisible by }p^{d-1}\}|=p-1$ and $\hat{B}=B'$. Then $\kappa(B)=(p-1)\cdot p^{d-1} + \kappa(B')=(p-1)\cdot p^{d-1}+ \sigma(R')=\sigma(R)$. 

Moreover, these bounds are tight: $F(\mathbf{d},\mathbf{\Omega}) = \sum_{i=1}^{n} \sigma(R_i)$, because if $m=\sum_{i=1}^{n} \sigma(R_i)$, then there exists integers $a_{ij}$ such that the proper nontrivial subset does not exist. Let $a_{ij}$ be $-1$ where $\sum_{l=1}^{i-1} \sigma(R_l) < j \leq \sum_{l=1}^{i} \sigma(R_l)$, and zero otherwise.
However, in the range $-p^{d_i},\dots,0$, the largest integer of the $R_i$-zero sets modulo $p^{d_i}$ is
\[
\sum_{r \not \in R_i} (p-1)\cdot p^r = p^{d_i} - 1 -  \sum_{r \in R_i} (p-1)\cdot p^r = p^{d_i} - 1 - \sigma(R_i) \equiv -\sigma(R_i) - 1 \quad \modu{p^{d_i}}
\]
and $-\kappa{R_i} \leq \sum_{j\in J} a_{ij} \leq 0$, hence, no nonempty subset exists that fulfills the condition (\ref{eq:cong}).
\end{proof}

\section{Complexity aspects of the Combinatorial Nullstellensatz}
\label{sec:combnull}
Due to various applications of the Combinatorial Nullstellensatz, it is a natural and important question to determine the computational complexity of the corresponding search problem. Now, we study the complexity of the Combinatorial Nullstellensatz over $\mathbb{F}_2$.
It is worth noting that if $t_i=0$ for some indices, then we could choose $|S_i|=1$, so in an appropriate vector $(s_1,s_2,\dots, s_m)$, we have to choose the only element of $S_i$ to $s_i$ and therefore, we could replace $x_i$ by the only element of $S_i$ in $f$. Hence, we may assume that $S_i=\mathbb{F}_2$ for every index $i$ and the problem is finding a vector $(s_1,s_2,\dots, s_m) \in \mathbb{F}_2^m$ such that $f(s_1,s_2,\dots s_m)\neq 0$.

The complexity of finding such a vector whose existence is guaranteed by the Combinatorial Nullstellensatz depends on the input form of the given polynomial.

It is easy to check that the problem belongs to P if the polynomial is given explicitly as the sum of monomials. First, we can replace the term $x_{i_1}^{t_{i_1}}x_{i_2}^{t_{i_2}}\dots x_{i_k}^{t_{i_k}}$ by $x_{i_1}x_{i_2}\dots x_{i_k}$, because these are equal due to the fact $0^t=0, 1^t=1$ in $\mathbb{F}_2$. Substitute 0 and 1 to $x_1$: let $g(x_2,\dots,x_{n})=f(0,x_2,\dots,x_{n})$ and $h(x_2,\dots,x_{n})=f(1,x_2,\dots,x_{n})$. If in $f$ the coefficient of $x_2x_3\dots x_m$ is nonzero, then in $g$ the coefficient of $x_2x_3\dots x_m$ will be also nonzero. If it is zero, in $h$ the coefficient of $x_2x_3\dots x_m$ will be nonzero. Then, substitute 0 and 1 to $x_2$ and in one of them the coefficient of $x_3x_4\dots x_m$ will be nonzero, and so on. Finally, we obtain a constant nonzero polynomial, and this means that for this substitution  $\vecc{s} \in \mathbb{F}_2^m$, $f(\vecc{s}) \neq 0$ holds. It is worth noting that a similar polynomial time algorithm can be obtained over arbitrary finite field, if the polynomial is given explicitly.

However, if the polynomial is given as the sum of products of polynomials (such as in most of the applications), the problem is not known to be solvable in polynomial time. An open question in \cite{west} is about the complexity of the Combinatorial Nullstellensatz conjecturing that the problem over $\mathbb{F}_2$ belongs to the class Polynomial Parity Argument (PPA) defined by Papadimitriou in \cite{papa}.

In this section, we verify this conjecture: we prove that the Combinatorial Nullstellensatz over $\mathbb{F}_2$ is in PPA if the polynomial is given as the sum of products of polynomials. Consequently, the applications given in Sections \ref{sec:subgraphs} and \ref{sec:louigi} also belong to PPA.

Roughly speaking, the class PPA is a subclass of the semantic class TFNP, the set of all total search problems. A search problem is called {\em total} if the corresponding decision problem is trivial, that is, for every feasible input, there exists a solution. A total problem is usually equipped with a mathematical proof showing that it belongs to TFNP, so the problems can be classified based on their proof styles. The complexity class PPA is the class of all search problems whose totality is proved using the parity argument: \textit{Every finite graph has an even number of odd-degree nodes.}

This class PPA can be defined with a canonical complete problem, the \textsc{End Of The Line}. Hence, a computational search problem is in PPA if and only if it is reducible to the problem \textsc{End Of The Line}.


In this problem, we are given a graph $G=(V,E)$ on exponentially many nodes. It can be assumed that each node has an unique code from $\Sigma^n$, that is $V\subseteq \Sigma^n$. The edges of the graph are described by a polynomial time algorithm in $n$. This polynomial time pairing function is the following.


%
%


For an undirected graph $G=(V,E)$, the function $\phi : V\times V \rightarrow V\cup\{*\}$ is called a {\em pairing function}, if it satisfies the following conditions: if $vw$ is not an edge of $G$, let $\phi(v,w)=*$. Otherwise, it outputs a node $w'=\phi(v,w)$ such that $w'$ is also connected to $v$ and $\phi(v,\phi(v,w))=w$ holds. Furthermore, for every $v$, at most one such node $w$ exists with property $\phi(v,w)=w$. 

It means that $\phi$ pairs up the neighbours of an input node $v$: for an even-degree node $v$, it pairs its neighbours completely, and for an odd-degree node $v$, $\phi$ pairs all but one neighbours. The task is to find an odd-degree node $v$ and a node $w$ such that $\phi(v,w)=w$. This node $w$ verifies that $v$ is an odd-degree node.

The problem \textsc{End Of The Line} can be defined as follows.

\begin{center}
\fbox{
\parbox{0.85\linewidth}{
\smallskip

\noindent
\center
\problem{End of the Line.}
\vspace{1mm}

\begin{tabularx}{\linewidth}{lX}
\textit{Input:}&
an undirected finite graph $G=(V,E)$ in the above way. The edges of the graph is described by a polynomial time pairing function.lists for a node its neighbours. Furthermore, a node $\varepsilon$ is given which has odd number of edges and a node $\delta$ which shows it: $\phi(\varepsilon,\delta)=\delta$. \\
\textit{Find:}&  another node $v$ which has odd number of edges and a node $w$ which give the certificate $\phi(v,w)=w$. 
\end{tabularx}
}}
\end{center}
\medskip

In order to prove problems belonging to PPA, we give reductions to the problem \textsc{End Of The Line}.

It is worth noting that this problem is computationally equivalent to the problem in which the nodes have at most two neighbours, a node of degree one is given and the task is to find another node which has exactly one incident edge. (Instead of the polynomial time pairing function, a polynomial time algorithm is given which outputs the neighbours of an input node.) It is easy to see that this is an easier problem, however, Papadimitriou showed that they are 
computationally equivalent.

In \cite{papa}, Papadimtriou shows that the following computational problem \textsc{Chévalley MOD 2} belongs to the class PPA. The required vector exists due to Chévalley's following theorem.

\begin{theo}[Chévalley]
Let $\mathbb{F}$ be a finite field with characteristic $p$. Let $p_1,p_2,\dots, p_n$ be polynomials in $m$ variables over $\mathbb{F}$. Suppose that $\sum_{i=1}^{n} \deg(p_i) < m$. Then, the number of common solutions of the polynomial equation system $p_i(x_1,\dots,x_m)=0$ $(i=1\dots n)$ is divisible by $p$. In particular, if there is a solution, there exists another.
\end{theo}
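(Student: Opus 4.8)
The plan is to run the classical Chevalley--Warning argument: construct an auxiliary polynomial whose value at a point detects membership in the solution set, and then evaluate its sum over all of $\mathbb{F}^m$ in two different ways.

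Write $q = |\mathbb{F}|$, a power of $p$, and let $N = \{a \in \mathbb{F}^m : p_i(a) = 0 \text{ for all } i\}$. First I would set
\[
P(x_1,\dots,x_m) = \prod_{i=1}^{n}\left(1 - p_i(x_1,\dots,x_m)^{q-1}\right).
\]
Since every nonzero element $t$ of $\mathbb{F}$ satisfies $t^{q-1} = 1$, for any $a \in \mathbb{F}^m$ we get $P(a) = 1$ if $a \in N$ and $P(a) = 0$ otherwise. Hence $\sum_{a \in \mathbb{F}^m} P(a) = |N| \cdot 1_{\mathbb{F}}$, i.e.\ this sum is the image of $|N|$ in $\mathbb{F}$, which vanishes precisely when $p \mid |N|$.

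Next I would compute the same sum monomial by monomial. By hypothesis $\deg P \le (q-1)\sum_{i}\deg(p_i) < (q-1)m$. For a single monomial the sum factors coordinatewise, $\sum_{a \in \mathbb{F}^m}\prod_j a_j^{e_j} = \prod_j\big(\sum_{t \in \mathbb{F}} t^{e_j}\big)$, so the key point is the power-sum evaluation $\sum_{t \in \mathbb{F}} t^e$: using a generator of the cyclic group $\mathbb{F}^{*}$ and a geometric-series computation, this equals $-1$ when $e$ is a positive multiple of $q-1$ and equals $0$ otherwise (the case $e = 0$ giving $q \cdot 1_\mathbb{F} = 0$ since $q \equiv 0 \pmod p$). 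Thus a monomial of $P$ contributes nonzero only if every exponent $e_j$ is a positive multiple of $q-1$, forcing $\sum_j e_j \ge (q-1)m$ — impossible for a monomial of degree less than $(q-1)m$. Therefore $\sum_{a \in \mathbb{F}^m} P(a) = 0$ in $\mathbb{F}$, and comparing with the previous paragraph yields $p \mid |N|$. The final assertion is then immediate: if $N \neq \emptyset$ then $|N| \ge p \ge 2$, so a second solution exists.

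I do not expect a genuine obstacle; the only steps needing care are the power-sum identity $\sum_{t \in \mathbb{F}} t^e$ and the bookkeeping that the strict degree inequality $\sum_i \deg(p_i) < m$ (equivalently $\deg P < (q-1)m$) excludes every monomial all of whose exponents are at least $q-1$.
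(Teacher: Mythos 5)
Your proof is correct: it is the classical Chevalley--Warning argument, and every step checks out --- the indicator polynomial $P=\prod_i\bigl(1-p_i^{\,q-1}\bigr)$, the identity $\sum_{a\in\mathbb{F}^m}P(a)=|N|\cdot 1_{\mathbb{F}}$, the power-sum evaluation $\sum_{t\in\mathbb{F}}t^e$ (zero unless $e$ is a positive multiple of $q-1$, in which case it is $-1$), and the degree bound $\deg P\le (q-1)\sum_i\deg(p_i)<(q-1)m$ ruling out any monomial with all exponents at least $q-1$. The paper itself does not prove this theorem: it cites it as a classical result and only remarks that it can be derived by a reduction to the Combinatorial Nullstellensatz (following Alon), which is the route relevant to the paper's complexity-theoretic agenda, since that reduction is what motivates West's question and the PPA membership proof of \textsc{Ch\'evalley MOD 2}. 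Your route is therefore genuinely different and in one respect stronger: the direct summation argument yields the full Warning-type conclusion that $p$ divides the number of common zeros, whereas the Nullstellensatz reduction most naturally gives only the existence of a second solution. The trade-off is that your argument is a self-contained counting proof, while the paper's preferred viewpoint exposes the combinatorial/algorithmic structure (pairing up solutions) that the rest of the paper exploits. One minor point of care: in the monomial-by-monomial computation you should fix the convention $0^0=1$ so that the constant monomial contributes $\prod_j(q\cdot 1_{\mathbb{F}})=0$, as you indicate; with that, the bookkeeping is complete.
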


\begin{center}
\fbox{
\parbox{0.85\linewidth}{
\smallskip

\noindent
\center
\problem{Chévalley MOD 2.}
\vspace{1mm}

\begin{tabularx}{\linewidth}{lX}
\textit{Input:}& 
polynomials $p_1,p_2, \dots , p_n$  in $\mathbb{F}_2[x_1,\dots x_m]$ such that $\sum_{i=1}^n \deg(p_i) < m$. Also, we are given a root $(c_1,c_2,\dots,c_m) \in \mathbb{F}_2^m$  of the equation system $p_i(\vecc{x})=0$ ($i=1,\dots,n$)\\
\textit{Find:}& another root of the equation system $p_i(\vecc{x})=0$ ($i=1,\dots,n$).
\end{tabularx}
}}
\end{center}
\medskip

Using Theorem \ref{combulltheo} and that Chévalley's theorem can be proved via a reduction to the Combinatorial Nullstellensatz, see \cite{alon}, one can give an alternative proof for the PPA membership of \textsc{Chévalley MOD 2}. Originally, this reduction motivates West's question \cite{west} about the  complexity of the Combinatorial Nullstellensatz.

Now, let us define the following computational problem. Note that the required vector exists due to the Combinatorial Nullstellensatz.

\begin{center}
\fbox{
\parbox{0.85\linewidth}{
\smallskip

\noindent
\center
\problem{Combinatorial Nullstellensatz over $\mathbb{F}_2$.}
\vspace{1mm}

\begin{tabularx}{\linewidth}{lX}
\textit{Input:}& 
a polynomial $f$ in $m$ variables in a general form $f= \sum_{i=1}^{k} \left( \prod_{j=1}^{m_i} p_{ij} \right)$, where $p_{ij}$ is an explicitly given polynomial in $\mathbb{F}_2[x_1,\dots x_m]$, $k,m_i$ and the number of monomials of $p_{ij}$ is polynomially bounded in $m$. Suppose that $\sum_{j=1}^{m_i} \deg(p_{ij}) \leq m$ for all $i$ and there is a polynomial time pairing function which can pair up all but one terms $x_1x_2\dots x_m$ to prove that the degree of $f$ is $m$ and the coefficient of $x_1x_2\dots x_m$ is nonzero.
\\
\textit{Find:}& a vector $(s_1,s_2,\dots, s_m) \in \mathbb{F}_2^m$ such that $f(s_1,s_2,\dots,s_m) \neq 0$.
\end{tabularx}
}}
\end{center}
\medskip

\begin{theo} \label{combulltheo}
The \textsc{Combinatorial Nullstellensatz over $\mathbb{F}_2$} is polynomially reducible to \textsc{End of the Line}.
Consequently, The \textsc{Combinatorial Nullstellensatz over $\mathbb{F}_2$} is in PPA.
\end{theo}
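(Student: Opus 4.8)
The plan is to reduce \textsc{Combinatorial Nullstellensatz over $\mathbb{F}_2$} to \textsc{End of the Line} by building, from the input data, a graph whose odd-degree nodes are exactly (related to) the vectors $\vecc{s}$ with $f(\vecc{s})\neq 0$, together with one known odd-degree node coming from the guaranteed ``witness'' vector $\vecc{0}$ or rather the origin of the counting argument. The natural skeleton is the classical proof of the Combinatorial Nullstellensatz via the Chevalley--Warning-style parity count: over $\mathbb{F}_2$, the sum $\sum_{\vecc{s}\in\mathbb{F}_2^m} f(\vecc{s})$ equals the coefficient of $x_1x_2\cdots x_m$ in the multilinear reduction of $f$ (using $x_j^2=x_j$), which is nonzero by hypothesis; hence the number of $\vecc{s}$ with $f(\vecc{s})=1$ is odd. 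I would turn this parity statement into an explicit \textsc{End of the Line} instance. First I would set up the arithmetic: present $f$ in its given form $f=\sum_{i=1}^k\prod_{j=1}^{m_i}p_{ij}$, where each $p_{ij}$ has polynomially many monomials, so that $f(\vecc{s})$ is computable in polynomial time for any given $\vecc{s}$, and so that the multilinear expansion is conceptually (not explicitly) the object whose $x_1\cdots x_m$-coefficient we track.

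The key steps, in order, are as follows. (1) Reduce to multilinear monomials: replace each $x_j^{t}$ by $x_j$ everywhere, which changes nothing on $\{0,1\}^m$; after expansion $f$ becomes a $\mathbb{F}_2$-linear combination of squarefree monomials $\prod_{j\in T}x_j$, $T\subseteq[m]$. (2) Observe $\sum_{\vecc{s}}f(\vecc{s}) = \sum_{T}[\text{coeff of }x_T]\cdot|\{\vecc{s}:\mathrm{supp}(\vecc{s})\supseteq T\}| = \sum_T c_T 2^{m-|T|}\pmod 2$, which equals $c_{[m]}$, the coefficient of $x_1\cdots x_m$. The pairing-function hypothesis in the input is precisely a polynomial-time certificate that $c_{[m]}=1$ (it pairs up all but one occurrence of the top monomial among the expanded products). (3) Build the graph: take vertex set essentially $\mathbb{F}_2^m$ (padded with codes of polynomial length), and define a pairing function $\phi$ so that each vertex $\vecc{s}$ with $f(\vecc{s})=0$ has even degree and each $\vecc{s}$ with $f(\vecc{s})=1$ has odd degree, while an auxiliary ``source'' gadget encodes the known nonzero contribution $c_{[m]}=1$ so that it is the given odd-degree node $\varepsilon$. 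Then any other odd-degree node output by \textsc{End of the Line} is a vector $\vecc{s}$ with $f(\vecc{s})\neq 0$, solving the problem. (4) Verify polynomial-time computability of $\phi$ from the pairing function for the top monomial and from evaluating $f$, and check the pairing-function axioms ($\phi(v,\phi(v,w))=w$, involutivity, at most one fixed neighbour).

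The main obstacle is step (3): designing the graph and the pairing $\phi$ so that the \emph{same} polynomial-time machinery that certifies $c_{[m]}=1$ (the pairing on the expanded products) is glued to the evaluation-based parity over $\mathbb{F}_2^m$ into a single consistent pairing function. Concretely, one wants a bipartite-style incidence structure between ``terms'' (choices of one monomial from each factor $p_{ij}$, producing a squarefree monomial $x_T$) and ``assignments'' $\vecc{s}$, where a term is incident to $\vecc{s}$ iff $T\subseteq\mathrm{supp}(\vecc{s})$; one then cancels in pairs the assignments with $T\subsetneq[m]$ using the freedom in coordinates outside $T$, and cancels terms whose monomial is not $x_{[m]}$ using the supplied pairing function, leaving an odd count linking $\varepsilon$ to the solutions. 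Making this incidence graph have degrees matching the $\mathbb{F}_2$-parities, and making $\phi$ well-defined and involutive on it, is the delicate bookkeeping; once the gadget is in place, the reduction is polynomial and the PPA membership follows immediately since \textsc{End of the Line} is complete for PPA. I expect the write-up to spend most of its effort specifying $\phi$ on this two-sided structure and checking the axioms, with the parity identity of step (2) being the short conceptual core.
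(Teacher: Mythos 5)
Your plan is the same as the paper's: a bipartite graph whose one side is $\mathbb{F}_2^m$ plus one extra node $w$, whose other side is the set of ``terms'' (one monomial chosen from each factor $p_{ij}$ of each block $\prod_j p_{ij}$, encoded as a tuple $(i,a_{i,1},\dots,a_{i,m_i})$), with an edge between a term $t$ and a vector $\vecc{s}$ iff $t(\vecc{s})=1$ and between $t$ and $w$ iff $t=x_1\cdots x_m$; term-side nodes are paired by flipping a coordinate outside the support of the term, the $w$-side uses the supplied pairing on occurrences of $x_1\cdots x_m$, and the odd-degree nodes are exactly $w$ and the solutions. So the skeleton and the parity identity are right.

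However, the step you defer as ``delicate bookkeeping'' --- defining the pairing function at a vector node $\vecc{x}$, i.e.\ pairing up the exponentially many terms $t$ with $t(\vecc{x})=1$ --- is precisely the one new idea the reduction needs, and your proposal does not supply it; flipping coordinates does not help here, and the supplied pairing function only handles the top monomial. The paper's construction is two-level. For a term in a block $g=\prod_j p_{ij}$ with $g(\vecc{x})=0$: take the least $j$ with $p_{ij}(\vecc{x})=0$; that factor has an even number of monomials equal to $1$ at $\vecc{x}$, so pair them and change only the $j$th coordinate of the tuple. For a block with $g(\vecc{x})=1$: each factor has an odd number of monomials equal to $1$ at $\vecc{x}$; pair all but one in each factor, leaving a distinguished tuple $(\omega_{i1},\dots,\omega_{i,m_i})$; any other tuple of the block is matched inside the block at the least coordinate where it deviates from $\omega$; finally the distinguished tuples of the blocks evaluating to $1$ (an even number of blocks when $f(\vecc{x})=0$, all but one pairable when $f(\vecc{x})=1$) are matched to each other by a block-level pairing. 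Without this hierarchical pairing the reduction is not complete, so you should regard it as a required construction rather than routine verification.
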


The proof of Theorem \ref{combulltheo} is similar to the proof for the PPA membership of \textsc{Chévalley MOD 2}. Our construction is based on that proof, nevertheless, we need a new key idea about the upper-level pairing function which pairs up blocks whose value at 1 for the substitution $\vecc{x}$.
 
We construct a graph, the nodes correspond to the vectors and the terms. The nodes with odd degree correspond to the vectors $\vecc{x}$ such that $f(\vecc{x})\neq 0$ and an extra node $w$. As we mentioned, we have to present a pairing function. It can be done easily at the terms, but it is more complicated at the vectors. The main idea here is the following.

We call the polynomials $\prod_{j=1}^{m_i} p_{ij}$ as the blocks of the input polynomial $f$. Each term is the product of monomials from the given polynomials of a block, so each term in the $i$th block can be represented by an $(m_i+1)$-tuple of integers: $(i,a_{i,1},\dots, a_{i,m_i})$. The first coordinate shows the block the term belongs to, and the other coordinates show the monomials the term is product of: it is the product of $a_{i,j}$th monomials of $p_{ij}$. (Note that the same term might have more than one occurrence and these occurrences are represented by different tuples.) In the next proof, we will pair up these tuples.

\begin{remark}
In a standard PPA-type problem definition it is required that the assumptions of the problem should be in NP. If the input is feasible, we have to return a solution, but if the input is infeasible, we have to return a polynomial certificate of infeasibility.

It is easy to check that the assumptions in the definition of the \textsc{Combinatorial Nullstellensatz over $\mathbb{F}_2$} are in NP. In the case of an infeasible input, we can give the following certificate: the index $i$ such that $\sum_{j=1}^{m_i} \deg(p_{ij}) > n$ or two occurrences of the term $x_1x_2\dots x_m$ for which the polynomial time pairing function fails.

\end{remark}

\begin{proof}[Proof of Theorem \ref{combulltheo}]

We shall construct a graph $\Gamma$ whose odd-degree nodes precisely correspond to appropriate vectors $\vecc{s}$ such that $f(\vecc{s}) \neq 0$ and furthermore, we add an extra node $w$: the standard leaf.

The graph is bipartite. The nodes on one side are all the vectors in $\mathbb{F}_2^m$ and the extra node $w$. The nodes of the other side are the terms of the polynomial $f= \sum_{i=1}^{k} \left( \prod_{j=1}^{m_i} p_{ij} \right)$. Each term is represented in the above way as an $(m_i+1)$-tuple of integers.

There is an edge between vector $\vecc{x}$ and term $t$ if and only if $t(\vecc{x})=1$, and there is an edge between the extra node $w$ and the term $t$ if and only if $t(\vecc{x})=x_1x_2\dots x_m$.

It is easy to see that for a vector $\vecc{x}$, $f(\vecc{x})\neq 0$ holds if and only if its degree is odd. The extra node $w$ also has an odd degree because the coefficient of $x_1x_2\dots x_m$ is nonzero due to the assumptions.

All nodes in the other side have even degree. In $\Gamma$, the degree of each term $t(\vecc{x})=x_1x_2\dots x_m$ is precisely 2, because it is connected only to the vector $(1,1,\dots,1)$ and to the extra $w$ node. Let $t$ be any other term, and let $x_l$ be a variable not appearing in $t$. Then, if $t$ is connected to $(s_1,s_2,\dots,0,\dots,s_m)$, it is also connected to $(s_1,s_2,\dots,1,\dots,s_m)$, so the degree of these nodes are even.

Therefore, odd-degree nodes are precisely the vectors  $\vecc{s}$ such that $f(\vecc{s}) \neq 0$ and the extra node $w$.

However, the nodes of this graph have exponentially large degrees, and therefore we must exhibit a pairing function between the edges incident to a node.

For a node corresponding to the term $t(\vecc{x}) \neq x_1x_2\dots x_m$, we pair up the vector $\vecc{x}$ for which $t(\vecc{x})=1$ to $(x_1,x_2,\dots,1-x_l,\dots x_m)$ where $x_l$ is such a variable which does not appear in $t$. (We choose the smallest such index $l$.) The degree of nodes corresponding to terms $x_1x_2\dots x_m$ in this side is only 2, its edges can be simply paired up.

For such node corresponding to a vector $\vecc{x}$ that $f(\vecc{x}) = 0$ holds, we should pair up the terms such that $t(\vecc{x})=1$. Suppose that the term $t$ is represented by $(i,a_{i1},\dots, a_{ij}, \dots, a_{i,m_i})$.

Denote its block by $g=\prod_{j=1}^{m_i} p_{ij}$. If $g(\vecc{x})=0$, then there is an index $j$ such that $p_{ij}(\vecc{x})=0$. Pick the smallest such $j$. There is an even number of monomials of $p_{ij}$ such that $p_{ij}(\vecc{x})=1$. We pair these monomials by a pairing function $\phi_i$. Then the mate of term $(i,a_{i1},\dots, a_{ij}, \dots, a_{i,m_i})$ is
$(i,a_{i1},\dots, \phi_i(a_{ij}),\dots, a_{i,m_i})$.

It is a more complicated case when $g(\vecc{x})=1$. Since $f(\vecc{x})=0$, there is an even number of indices $l$, such that $\left( \prod_{j=1}^{m_l} p_{lj} \right)$ is 1 at $\vecc{x}$. We pair these blocks by a pairing function $\phi$. So, for $i$ and every $j=1,\dots ,m_i$, $p_{ij}$ is 1 at $\vecc{x}$, and we can pair all but one monomials of $p_{ij}$ with $p_{ij}(\vecc{x})=1$ by a pairing function $\phi_{ij}$. One of them does not have a mate, denote its index by $\omega_{ij}$. If $a_{ij}=\omega_{ij}$ for all indices $j$, then we define its mate to be $(\phi(i),\omega_{\phi(i),1},\dots, \omega_{\phi(i),m_{\phi(i)}})$.
Otherwise there is an index $j$ such that $a_{ij} \neq \omega_{ij}$. Pick the smallest such $j$. Then the mate of $(i,a_{i1},\dots, a_{i,m_i})$ is defined as $(i,a_{i1},\dots,\phi_{ij}(a_{ij}),\dots, a_{i,m_i})$.

Observe that this gives a bijection and a correct pairing function.

For such node corresponding to a vector $\vecc{x}$ that $f(\vecc{x}) = 1$ holds, we should pair up all but one terms such that $t(\vecc{x})=1$. If $t$ is a term of such block $g=\prod_{j=1}^{m_i} p_{ij}$ that $g(\vecc{x})=0$ holds, it can be paired up similarly to the previous case. If $g(\vecc{x})=1$, we pair these blocks by a pairing function. One of them does not have a mate, denote its index by $\Omega$. If $g$ is not the block with index $\Omega$, the pairing can be similar to the previous case. If $g$ is the block with index $\Omega$, we can pair up the terms similarly to the previous case, only the term $t$ represented by $(\Omega,\omega_{\Omega 1},\dots, \omega_{\Omega,m_\Omega})$ does not have a mate. So we paired up all but one neighbours of the node corresponding to the vector $\vecc{x}$.

Finally, we pair up the terms which are connected to the extra node $w$. These are the terms $x_1x_2\dots x_m$. Due to the assumptions, there is a polynomial time pairing function which can pair up all but one terms $x_1x_2\dots x_m$, so it can pair up the nodes which are connected to the extra node $w$.

We presented a polynomial algorithm that computes the mate of an edge out of a node, so the proof is complete. \end{proof}

\section{Complexity of finding divisible subgraphs}
\label{sec:subgraphs}
Alon, Friedland and Kalai proved the following corollary of the original Olson's Theorem \ref{olson} and using it in the case $p=2$, they derived the result on $p^d$-divisible subgraphs (Theorem \ref{qdiv}) mentioned in the Introduction. We present their proofs since they also show the reductions between the corresponding computational problems.

\begin{cor}[\cite{reg}] \label{corolson}
Let $n,m$ be positive integers and let $p$ be a prime. Let $d_1\geq d_2 \geq \dots \geq d_n\geq 1 $ positive integers and for $i=1,\dots ,n,j=1,\dots ,m$ let $a_{ij}$ be an integer such that $\sum_{i=1}^{n} a_{ij}$ is divisible by $p$ for every $j$ index. If $m > p^{d_n-1}-1 + \sum_{i=1}^{n-1} (p^{d_i} -1)$, then there is a subset $\emptyset \neq J \subseteq \{1,2,\dots,m \}$ such that $\sum_{j\in J} a_{ij}$ is divisible by $p^{d_i}$ for every $i=1,\dots ,n$.
\end{cor}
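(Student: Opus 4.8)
The plan is to derive this directly from Olson's theorem (Theorem~\ref{olson}), using the extra hypothesis $p \mid \sum_{i=1}^{n} a_{ij}$ to trade the largest-index modulus $p^{d_n}$ for $p^{d_n-1}$ — which is exactly the saving reflected in the stated bound on $m$. The guiding observation is that the linear relation $\sum_i a_{ij} = p\,c_j$ lets the $n$-th congruence be handled \emph{one power of $p$ cheaper}, because the partial sums of the first $n-1$ sequences are already divisible by large powers of $p$.

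Concretely, I would first set $c_j := \frac{1}{p}\sum_{i=1}^{n} a_{ij}$, which is a genuine integer by hypothesis. Then I would apply Olson's theorem (with all target sets equal to $\{0\}$) to the system consisting of the sequences $(a_{1j})_j,\dots,(a_{n-1,j})_j,(c_j)_j$ together with the exponent list $(d_1,\dots,d_{n-1},d_n-1)$; this list is still non-increasing since $d_{n-1}\ge d_n\ge d_n-1\ge 0$. Olson's theorem then gives a nonempty $J\subseteq\{1,\dots,m\}$ with $\sum_{j\in J} a_{ij}\equiv 0 \pmod{p^{d_i}}$ for $i=1,\dots,n-1$ and $\sum_{j\in J} c_j\equiv 0\pmod{p^{d_n-1}}$, precisely because the hypothesis $m > p^{d_n-1}-1+\sum_{i=1}^{n-1}(p^{d_i}-1)$ says that $m$ exceeds $F(\mathbf{d}',\mathbf{Q}')$ for this reduced exponent list.

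The last step is to check that this same $J$ automatically satisfies the remaining congruence $\sum_{j\in J} a_{nj}\equiv 0 \pmod{p^{d_n}}$. Writing $a_{nj}=p\,c_j-\sum_{i=1}^{n-1}a_{ij}$ and summing over $j\in J$ yields
\[
\sum_{j\in J} a_{nj} \;=\; p\sum_{j\in J} c_j \;-\; \sum_{i=1}^{n-1}\sum_{j\in J} a_{ij},
\]
where $p\sum_{j\in J}c_j$ is divisible by $p\cdot p^{d_n-1}=p^{d_n}$, and, for each $i<n$, the inner sum is divisible by $p^{d_i}$, hence by $p^{d_n}$ since $d_i\ge d_n$. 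So the whole right-hand side is divisible by $p^{d_n}$, and $J$ is the desired subset. (The degenerate case $d_n=1$ is covered too: then the $c$-condition is modulo $p^0=1$ and vacuous.)

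There is no real obstacle here beyond the bookkeeping with the moduli: the one point that must not be skipped is invoking the monotonicity $d_i\ge d_n$ for $i<n$, so that divisibility of $\sum_{j\in J} a_{ij}$ by $p^{d_i}$ already forces divisibility by $p^{d_n}$; it is this fact that upgrades the final congruence from modulus $p$ (which the relation $\sum_i a_{ij}=p c_j$ gives essentially for free) all the way up to modulus $p^{d_n}$.
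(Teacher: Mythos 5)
Your proof is correct and follows essentially the same route as the paper: replace the $n$-th row by $c_j=\frac1p\sum_i a_{ij}$, apply Olson's theorem with the reduced exponent $d_n-1$ in the last slot, and recover the $n$-th congruence from the linear relation together with the monotonicity $d_i\ge d_n$. The only difference is cosmetic (the paper's writeup even has a typo, writing $p^{d_{n-1}}$ where $p^{d_n-1}$ is meant, which your version states correctly).
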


\begin{proof}[Proof in \cite{reg}] For $\ j=1,\dots ,m$, let $b_{ij}=a_{ij}$, if $i=1,\dots ,n-1$ and let $b_{nj}=\frac{1}{p}\left( \sum_{i=1}^{n} a_{ij} \right)$.

According to Olson's Theorem \ref{olson}, there is an $\emptyset \neq J \subseteq \{1,2,\dots,m\}$ such that $\sum_{j\in J} b_{ij}$ is divisible by $p^{d_i}$ for every $i=1,\dots ,n-1$ and $\sum_{j\in J} b_{nj}$ is divisible by $p^{d_{n-1}}$ because $m > p^{d_n-1}-1 + \sum_{i=1}^{n-1} (p^{d_i} -1)$.

However, $\sum_{j\in J} b_{nj} = \sum_{j\in J} \frac{1}{p}\left( \sum_{i=1}^{n} a_{ij} \right)$ is divisible by $p^{d_{n-1}}$ , so $\sum_{j\in J} \left( \sum_{i=1}^{n} a_{ij} \right)$ is divisible by $p^{d_{n}}$. Because of $d_1\geq d_2 \geq \dots \geq d_n\geq 1$, $\sum_{j\in J} b_{ij} = \sum_{j\in J} a_{ij}$ is divisible by $p^{d_n}$ for every $i=1,\dots ,n-1$, hence $\sum_{j\in J} a_{nj}$ should be divisible by $p^{d_n}$ and we are done. \end{proof}

\begin{theo}[Alon, Friedland, Kalai, \cite{reg}] \label{qdiv} For the maximum number of edges of a graph $G$ on $n$ vertices that contains no nontrivial $p^d$-divisible subgraph,
\[
f(n,p^d) =
\begin{cases}
 (p^d-1) \cdot n & \textnormal{ if }p\textnormal{ is an odd prime.}\\
 (2^d-1) \cdot n - 2^{d-1} & \textnormal{ if }p=2
\end{cases}
\]
\end{theo}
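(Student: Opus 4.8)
The plan is to prove the two inequalities $f(n,p^d)\le f^{\star}$ and $f(n,p^d)\ge f^{\star}$ separately, where $f^{\star}$ denotes the asserted value, following the approach of Alon, Friedland and Kalai. The upper bound states that every (multi)graph on $n$ vertices with more than $f^{\star}$ edges contains a nontrivial $p^d$-divisible subgraph, and this I would deduce from the results already established; the lower bound amounts to an explicit extremal construction together with a direct verification.

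For the upper bound I would phrase the divisible-subgraph question as a system of linear congruences. Given a multigraph $G=(V,E)$ on $n$ vertices with $m$ edges, let $a_{v,e}$ be the number of endpoints of the edge $e$ lying at the vertex $v$, so $a_{v,e}\in\{0,1,2\}$, the value $2$ occurring exactly when $e$ is a loop at $v$. A nonempty $J\subseteq E$ with $\sum_{e\in J}a_{v,e}\equiv 0\ \modu{p^d}$ for every $v\in V$ is precisely a nontrivial $p^d$-divisible subgraph. For an odd prime $p$ I would apply Olson's Theorem~\ref{olson} to the $n$ congruences, each modulo $p^{d}$ with $Q_i=\{0\}$: since $F(\mathbf d,\mathbf Q)=\sum_{i=1}^n(p^{d}-1)=(p^d-1)n$, such a $J$ exists whenever $m>(p^d-1)n$, hence $f(n,p^d)\le (p^d-1)n$. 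For $p=2$ the extra ingredient is that every edge satisfies $\sum_{v\in V}a_{v,e}=2$, which is divisible by $2$; hence Corollary~\ref{corolson} applies with $d_1=\dots=d_n=d$ and yields a nontrivial $2^d$-divisible subgraph as soon as $m>2^{d-1}-1+(n-1)(2^d-1)=(2^d-1)n-2^{d-1}$, giving $f(n,2^d)\le (2^d-1)n-2^{d-1}$.

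For the matching lower bound I would exhibit graphs meeting these bounds and check directly that they have no nontrivial $p^d$-divisible subgraph. For odd $p$, take $V=\{v_1,\dots,v_n\}$ and place $p^d-1$ loops at each vertex: if a nonempty subgraph $H$ uses $j_i$ loops at $v_i$ then $\deg_H(v_i)=2j_i$, and $p^d\mid 2j_i$ together with $0\le j_i\le p^d-1$ forces $j_i=0$ since $\gcd(2,p^d)=1$; this graph has $(p^d-1)n$ edges. For $p=2$, take the path $v_1v_2\cdots v_n$ with $2^d-1$ parallel edges between consecutive vertices and add $2^{d-1}-1$ loops at $v_n$: propagating the congruences from $v_1$ along the path forces every path-bundle multiplicity used by a divisible subgraph $H$ to be $0$, after which $\deg_H(v_n)=2j$ with $2^{d-1}\mid j$ and $0\le j\le 2^{d-1}-1$ forces $j=0$; this graph has $(n-1)(2^d-1)+(2^{d-1}-1)=(2^d-1)n-2^{d-1}$ edges. (Loops are genuinely needed: already for $n=1$ the asserted value is positive.)

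The step I expect to be the real obstacle is the $p=2$ extremal construction: one has to find the configuration realising the global correction $-2^{d-1}$ rather than the $-(2^d-1)$ that a plain path-of-bundles would give, and then certify it is optimal — which is exactly the place where the special feature of $p=2$, namely that the column sums of the incidence matrix are $2\equiv 0$ (the same feature underlying Corollary~\ref{corolson}), must be turned into a tight example. Fixing the precise model (loops and parallel edges, and the small values of $n$) is a minor but necessary bookkeeping point.
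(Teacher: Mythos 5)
Your upper-bound argument is exactly the paper's: encode $G$ by its incidence matrix, apply Olson's Theorem~\ref{olson} for odd $p$, and for $p=2$ exploit the column sums $\sum_i a_{ij}=2$ to invoke Corollary~\ref{corolson} with $d_1=\dots=d_n=d$; the arithmetic $(n-1)(2^d-1)+(2^{d-1}-1)=(2^d-1)n-2^{d-1}$ matches. Where you genuinely diverge is that the paper deliberately proves only the direction $\leq$ and defers tightness to \cite{reg}, whereas you supply extremal constructions, and these check out: for odd $p$ the $p^d\mid 2j_i$ versus $0\le j_i\le p^d-1$ argument is airtight, and for $p=2$ the propagation along the path plus the loop count at $v_n$ gives the stated edge total and excludes any nontrivial divisible subgraph. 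The one point you should make explicit is the graph model. Your constructions need loops, and the formula itself forces them (for $n=1$ and $p=2$ the claimed value $2^{d-1}-1$ is positive, so the extremal graph must consist of loops), with the convention that a loop contributes $2$ to the degree --- otherwise the upper bound fails, since $2^{d-1}$ loops at one vertex would have no nonempty edge subset of size divisible by $2^d$. Under that convention the paper's upper-bound proof still goes through provided one sets $a_{vj}=2$ for a loop at $v$ (preserving both the column-sum condition and the meaning of $\sum_{j\in J}a_{ij}$ as the degree), a detail the paper glosses over by writing $a_{ij}\in\{0,1\}$. So: same proof of the inequality the paper actually proves, plus a correct self-contained treatment of the lower bound that the paper outsources; just pin down ``multigraph with loops, loops counted twice'' at the outset.
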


\begin{proof}[Proof in \cite{reg}] Here, we only prove the direction $\leq$ of the equality. In \cite{reg}, Alon et al. showed by examples that these bounds are tight.

Denote the number of edges of $G$ by $m$. Let $a_{ij}=1$ if and only if the $j^{th}$ edge is incident to the $i^{th}$ vertex, as usual. (That is, $((a_{ij}))$ is the incidence matrix of $G$.)

For an odd prime $p$, suppose $m>(p^d-1) \cdot n$. According to Olson's Theorem \ref{olson} with $d_1=d_2=\dots = d_n=d$, there is a nonempty subset $J$  of edges such that 
$\sum_{j\in J} a_{ij}$ is divisible by $p^d$ for every $i$, so there is nontrivial $p^d$-divisible subgraph.

For $p=2$, suppose $m>(2^d-1) \cdot n - 2^{d-1}= (n-1)\cdot (2^d-1) + (2^{d-1}-1) $. According to Corollary \ref{corolson} with $d_1=d_2=\dots = d_n=d$, there is a nonempty subset $J$  of edges such that $\sum_{j\in J} a_{ij}$ is divisible by $p^d$ for every $i$, so there is nontrivial $p^d$-divisible subgraph. The conditions of corollary hold, because $\sum_{i=1}^{n} a_{ij}=2$ for every index $j$. \end{proof}

%
%

Let us now define the computational problems corresponding to the above theorems. The existence of the solutions is guaranteed by Theorem \ref{qdiv}, Theorem \ref{olson} and Corollary \ref{corolson}, respectively.

\begin{center}
\fbox{
\parbox{0.85\linewidth}{
\smallskip

\noindent
\center
\textsc{$2^d$-divisible subgraph.}
\vspace{1mm}

\begin{tabularx}{\linewidth}{lX}
\textit{Input:}& a positive integer $d$ and a graph $G=(V,E)$, where $|V|=n$, $|E|=m$ and $m> n\cdot (2^d - 1 ) - 2^{d-1}$ holds.
\\
\textit{Find:}& a $2^d$-divisible subgraph, that is, an $\emptyset \neq F \subseteq E$ such that for every $v \in V$, the number of incident edges of $F$ is divisible by $2^d$.
\end{tabularx}
}}
\end{center}
\medskip



\begin{center}
\fbox{
\parbox{0.85\linewidth}{
\smallskip

\noindent
\center
\textsc{Olson MOD $2^d$.}
\vspace{1mm}

\begin{tabularx}{\linewidth}{lX}
\textit{Input:}& 
a positive integer $d$, the integers $n$ and $m$ such that $m> n\cdot (2^d - 1 )$ and given integers $a_{ij}$ ($i=1,\dots ,n , j=1,\dots,m$).
\\
\textit{Find:}& a $\emptyset \neq J \subseteq \{1,2,\dots,m\}$ such that $\sum_{j\in J} a_{ij}$ is divisible by $2^d$ for every $i$.
\end{tabularx}
}}
\end{center}
\medskip

\begin{center}
\fbox{
\parbox{0.85\linewidth}{
\smallskip

\noindent
\center
\problem{Even-Sum Olson MOD $2^d$.}
\vspace{1mm}

\begin{tabularx}{\linewidth}{lX}
\textit{Input:}& 
a positive integer $d$, the integers $n$ and $m$ such that  $m> n\cdot (2^d - 1 ) - 2^{d-1}$ and given integers $a_{ij}$ ($i=1,\dots ,n , j=1,\dots ,m$) such that $2\mid \sum_{i=1}^n a_{ij}$.
\\
\textit{Find:}& a $\emptyset \neq J \subseteq \{1,2,\dots,m\}$ such that $\sum_{j\in J} a_{ij}$ is divisible by $2^d$ for every $i$.
\end{tabularx}
}}
\end{center}
\medskip

As we have seen in the previous sections, Olson's theorem can be proved via the Combinatorial Nullstellensatz. It implies the following propositions.

\begin{theo} \label{olsred}
\textsc{Olson MOD $2^d$} and \textsc{Even-Sum Olson MOD $2^d$} are polynomially reducible to the \textsc{Combinatorial Nullstellensatz  over $\mathbb{F}_2$}. Consequently, they are in PPA.
\end{theo}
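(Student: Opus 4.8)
The plan is to reduce both problems to the \textsc{Combinatorial Nullstellensatz over $\mathbb{F}_2$} by directly unwinding the polynomial constructions used to prove Corollary \ref{combnullprimes}, Theorem \ref{maintheorem}, and Corollary \ref{corolson}, and verifying that every object produced along the way admits the explicit ``sum of products of polynomials'' form with a polynomial-time pairing function certifying the top coefficient. Since PPA is closed under polynomial-time reductions and Theorem \ref{combulltheo} places \textsc{Combinatorial Nullstellensatz over $\mathbb{F}_2$} in PPA, the membership claim follows immediately once the reductions are in place.

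\textbf{Step 1: \textsc{Olson MOD $2^d$}.} Given the data $d$, $n$, $m$, and integers $a_{ij}$ with $m > n(2^d-1)$, set $f_i(\vecc{x}) = \sum_{j=1}^m a_{ij} x_j$ and $Q_i = \{0\} \subseteq \mathbb{Z}_{2^{d}}$ for every $i$, so $\mathbb{Z}_{2^d}\setminus Q_i$ has $price$ at most $2^d-1$ by the construction in Equation (\ref{eq:alonconstr}) (with $p=2$, $Q'=\{0\}$). I would take for each $i$ the single covering polynomial $h_i(T) = \frac{1}{2^\delta}\prod_{q\neq 0}(T-q)$ from that construction, and then follow the proof of Theorem \ref{maintheorem} verbatim: form the polynomial $f(\vecc{x}) = \prod_{i=1}^n \Psi^{h_i}(f_i(\vecc{x})) - c\prod_{j=1}^m(1-x_j)$ over $\mathbb{F}_2$. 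The point is that $\Psi^{h_i}(f_i)$ is an \emph{explicitly computable} polynomial of degree at most $(2^d-1)\cdot 1$: its monomials are the products $p_{i_1}\cdots p_{i_r}$ of at most $2^d-1$ of the terms $a_{ij}x_j$, weighted by the integer coefficients $\alpha_r$ of $h_i$, all reduced mod $2$. Since $d$ is part of the input, $2^d$ is polynomial in the input size, so $\Psi^{h_i}(f_i)$ has polynomially many monomials and is computable in polynomial time; the whole $f$ is then a product of $n+1$ such explicit polynomials (the $\Psi^{h_i}(f_i)$ and the linear factors $1-x_j$), i.e.\ exactly one block in the ``sum of products'' form, of total degree $\le n(2^d-1) + m \le m$ wait --- rather, $f$ itself is already a single product, so it \emph{is} of the required form with $k=1$. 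The pairing function on the (unique) top-degree term $x_1\cdots x_m$ is trivial. A solution vector $\vecc{s}$ with $f(\vecc{s})\neq 0$ has some coordinate equal to $1$ (since $f(\vecc{0})=0$) and, as in the proof of Theorem \ref{maintheorem}, yields $J=\{j : s_j=1\}$ with $\sum_{j\in J}a_{ij}$ divisible by $2^{d_i}$ for all $i$; this $J$ is computed from $\vecc{s}$ in polynomial time.

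\textbf{Step 2: \textsc{Even-Sum Olson MOD $2^d$}.} Here I would compose the reduction of Step 1 with the proof of Corollary \ref{corolson} (in the $p=2$, $d_1=\dots=d_n=d$ case): given $a_{ij}$ with $2\mid\sum_i a_{ij}$, set $b_{ij}=a_{ij}$ for $i<n$ and $b_{nj}=\tfrac12\sum_{i=1}^n a_{ij}$ (an integer, by hypothesis), with modulus $2^{d-1}$ in the last coordinate and $2^d$ elsewhere. The bound $m > n(2^d-1) - 2^{d-1} = (n-1)(2^d-1)+(2^{d-1}-1)$ is exactly what Theorem \ref{genolsontheorem} / Theorem \ref{maintheorem} needs for the modified moduli, so the same $\Psi^{h_i}$ construction (now with $h_n$ the covering polynomial for $\mathbb{Z}_{2^{d-1}}\setminus\{0\}$, price $2^{d-1}-1$) gives an instance of \textsc{Combinatorial Nullstellensatz over $\mathbb{F}_2$}. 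From a solution $J$ one recovers divisibility by $2^{d-1}$ of $\sum_{j\in J}b_{nj}$, hence divisibility by $2^d$ of $\sum_{j\in J}\sum_i a_{ij}$, and then --- since $d_n = d$ is the minimum --- divisibility by $2^d$ of each $\sum_{j\in J}a_{ij}$ individually, exactly as in the proof of Corollary \ref{corolson}. All arithmetic is polynomial-time.

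\textbf{Main obstacle.} The substantive point --- and the thing that needs to be checked carefully rather than asserted --- is that the polynomial fed to \textsc{Combinatorial Nullstellensatz over $\mathbb{F}_2$} genuinely satisfies the input conventions of that problem: the block count $k$, the factor count per block, the number of monomials in each $p_{ij}$, and the coefficients $\alpha_r$ of the integer-valued polynomials $h_i$ must all be \emph{polynomially bounded and polynomial-time computable} in $d + n + m + \sum_{ij}\log|a_{ij}|$. Since $2^d$ appears, one must be slightly careful: the number of monomials of $\Psi^{h_i}(f_i) = \sum_{r=0}^{\deg h_i}\alpha_r \Psi_r(f_i)$ is $\sum_r |\{(i_1<\dots<i_r)\}| = \sum_{r\le 2^d-1}\binom{m}{r}$, which is \emph{not} polynomially bounded in general. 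The fix is to expand $\Psi^{h_i}(f_i(\vecc{x}))$ and collect terms: modulo $2$ on the domain $\{0,1\}^m$ one has $x_j^2 = x_j$, so $\Psi^{h_i}(f_i)$ reduces to a multilinear polynomial whose value at $\{0,1\}^m$-points agrees with $h_i(f_i(\vecc{x}))$; but a multilinear polynomial in $m$ variables can still have exponentially many monomials. The cleaner resolution, and the one I would adopt, is \emph{not} to expand: keep $f(\vecc{x}) = \prod_{i=1}^n \Psi^{h_i}(f_i(\vecc{x})) \cdot \prod_{j=1}^m(1-x_j) \,\oplus\, (\text{correction})$ as a \emph{product of blocks} where each factor $\Psi^{h_i}(f_i(\vecc{x}))$ is itself presented as a short sum (over $r \le 2^d-1$, polynomially many) of the symmetric-function polynomials $\alpha_r\Psi_r(f_i)$, and each $\Psi_r(f_i)$ is evaluated by its own auxiliary routine. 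One must then confirm that the \textsc{Combinatorial Nullstellensatz over $\mathbb{F}_2$} interface tolerates this nesting (or, equivalently, absorb the extra structure into the pairing-function argument of Theorem \ref{combulltheo}); alternatively, replace $h_i$ by a \emph{product} $\prod_{q\neq 0}(T-q)$ divided by a power of $2$, so that $\Psi^{h_i}(f_i)$ is literally of the form needed. Verifying that one of these presentations is faithful --- i.e.\ has the correct value on $\{0,1\}^m$, correct degree $m$, and nonzero coefficient of $x_1\cdots x_m$ with a polynomial-time pairing witness --- is the only real work; everything else is bookkeeping that follows the proofs already given in Sections \ref{sec:proof} and \ref{sec:subgraphs}.
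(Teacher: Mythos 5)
Your overall route is the paper's: instantiate the polynomial from the proof of Theorem \ref{maintheorem} with $f_i(\vecc{x})=\sum_j a_{ij}x_j$, $Q_i=\{0\}$ and the single covering polynomial of Equation (\ref{eq:alonconstr}), feed $\prod_i\Psi^{h_i}(f_i)-c\prod_j(1-x_j)$ to the \textsc{Combinatorial Nullstellensatz over $\mathbb{F}_2$} (the pairing function for the top term is indeed trivial, since $x_1\cdots x_m$ occurs exactly once, in the second block), and for \textsc{Even-Sum Olson MOD $2^d$} precompose with the substitution $b_{nj}=\frac{1}{2}\sum_i a_{ij}$ and modulus $2^{d-1}$ in the last row, exactly as in Corollary \ref{corolson}. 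Two small slips: the constructed $f$ is a sum of \emph{two} blocks ($k=2$), not a single product, and in your last paragraph the minus sign between the blocks has mutated into a product.

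The difficulty is that you isolate the one genuinely nontrivial point and then leave it open. As you correctly observe, $\Psi^{h_i}(f_i)=\sum_r\alpha_r\Psi_r(f_i)$ has on the order of $\sum_{r\le 2^d-1}\binom{k_i}{r}$ monomials ($k_i$ being the number of coefficient-one monomials of $f_i$), which is superpolynomial, so this factor does not literally meet the input convention of the target problem, which requires each $p_{ij}$ to be explicitly given with polynomially many monomials. Of your two proposed fixes, the second goes nowhere: $\Psi^h$ is defined through the binomial-basis coefficients $\alpha_r$ of $h$, so presenting $h$ as $\frac{1}{2^\delta}\prod_{q\neq 0}(T-q)$ does not make $\Psi^h(f_i)$ factor into small explicit pieces. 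The first fix is the right one --- keep the factors $\Psi_r(f_i)$ implicit and extend the pairing argument of Theorem \ref{combulltheo}: for fixed $\vecc{x}$ the monomials of $\Psi_r(f_i)$ equal to $1$ at $\vecc{x}$ are the $r$-subsets of a $c$-element set with $c=f_i(\vecc{x})$, and one must exhibit a polynomial-time pairing of these $\binom{c}{r}$ subsets leaving exactly one unpaired precisely when $\binom{c}{r}$ is odd (such a pairing exists via the carry structure behind Kummer's theorem, and is the same device Papadimitriou's \textsc{Chevalley MOD 2} argument rests on) --- but you would have to construct it rather than defer it as ``the only real work.'' To be fair, the paper's own proof is equally thin at exactly this spot: it asserts the constructed polynomial has size $O(2^d\cdot d\cdot n+m)$, a count that only makes sense for the implicit description $(h_i,f_i)$ and not for the expanded factors that the problem definition demands. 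Still, as submitted your argument stops where the actual proof has to begin.
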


\begin{proof}
In the proof of Theorem \ref{genolsontheorem} and Theorem \ref{maintheorem}, we construct a polynomial $f$ in $m$ variables over $\mathbb{F}_2$ such that $\deg(f)=m$ and the coefficient of $\prod_{j=1}^m x_j$ is nonzero. Such vectors $\vecc{s}$ that satisfy $f(\vecc{s})\neq 0$ precisely correspond to the subsets $\emptyset \neq J \subseteq \{1,2,\dots,m\}$ such that $\sum_{j\in J} a_{ij}$ is divisible by $2^d$ for every $i$.

We only have to check that this reduction is a polynomial reduction. In the proofs the size of the constructed polynomial is $O(2^d \cdot d \cdot n + m)$, which can be bounded $O(nm\log(m))$ due to condition $m> n\cdot (2^d - 1 )$, so the reduction is polynomial.

For reduction to the \textsc{Combinatorial Nullstellensatz  over $\mathbb{F}_2$}, we have to present a pairing function which can pair up terms $x_1x_2\dots x_m$. Here it is obvious, because there is only one term $x_1x_2\dots x_m$.

Similarly, one can check that \textsc{Even-Sum Olson MOD $2^d$} is also polynomially reducible to the \textsc{Combinatorial Nullstellensatz  over $\mathbb{F}_2$}. \end{proof}

The reduction in the proof of Theorem \ref{qdiv} immediately implies the following.

\begin{theo} \label{graphred}
\textsc{$2^d$-divisible subgraph} is polynomially reducible to \textsc{Even-Sum Olson MOD $2^d$}. Consequently, \textsc{$2^d$-divisible subgraph} is in PPA.
\end{theo}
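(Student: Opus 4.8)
The plan is to make the reduction in the proof of Theorem~\ref{qdiv} completely explicit and to check that it is polynomial-time computable together with its inverse on solutions, so that it is a genuine many-one reduction of search problems. Given an instance of \textsc{$2^d$-divisible subgraph}, namely a graph $G=(V,E)$ with $|V|=n$, $|E|=m$ and $m>n\cdot(2^d-1)-2^{d-1}$, I would first form the incidence matrix $((a_{ij}))$, where $a_{ij}=1$ iff the $j^{th}$ edge is incident to the $i^{th}$ vertex and $a_{ij}=0$ otherwise. This is an $n\times m$ $0$--$1$ matrix computable in time polynomial in the size of $G$.

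Next I would observe that this matrix, together with $d$, $n$, $m$, is a valid instance of \textsc{Even-Sum Olson MOD $2^d$}: the dimension inequality $m>n\cdot(2^d-1)-2^{d-1}$ is exactly the required bound, and the parity condition $2\mid\sum_{i=1}^n a_{ij}$ holds for every column $j$ because each edge has exactly two endpoints, so $\sum_{i=1}^n a_{ij}=2$. Hence the instance we produced is feasible and the certificate of feasibility is trivial. Then I would pass back a solution: \textsc{Even-Sum Olson MOD $2^d$} returns a nonempty $J\subseteq\{1,\dots,m\}$ with $\sum_{j\in J}a_{ij}$ divisible by $2^d$ for every $i$; setting $F=\{\,e_j : j\in J\,\}\subseteq E$ gives a nonempty edge subset in which the number of edges of $F$ incident to vertex $i$ is exactly $\sum_{j\in J}a_{ij}\equiv 0\pmod{2^d}$, i.e.\ a $2^d$-divisible subgraph. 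This backward map $J\mapsto F$ is an obvious polynomial-time bijection. Both directions run in time polynomial in $n+m+\log d$ (indeed $d=O(\log m)$), so the composed map is a polynomial many-one reduction; combining it with Theorem~\ref{olsred}, which places \textsc{Even-Sum Olson MOD $2^d$} in PPA, gives \textsc{$2^d$-divisible subgraph}$\in$PPA by closure of PPA under polynomial reductions.

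I do not expect any real obstacle here: the mathematical content is entirely contained in the ``Proof in \cite{reg}'' of Theorem~\ref{qdiv} (the case $p=2$), which is exactly the statement that the incidence matrix of $G$ satisfies the even-column-sum hypothesis and that Corollary~\ref{corolson} then yields the divisible subgraph. The only thing to be careful about is the bookkeeping that turns an existence argument into a search reduction: checking that (i) the produced \textsc{Even-Sum Olson MOD $2^d$} instance literally meets the size bound and parity precondition appearing in that problem's definition, and (ii) the translation of a returned index set $J$ into the edge set $F$ and back is computable in polynomial time and sends solutions to solutions. Once those routine verifications are in place the theorem follows, as stated, by chaining with Theorem~\ref{olsred}.
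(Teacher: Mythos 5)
Your proposal is correct and is exactly the argument the paper intends: the paper's "proof" of this theorem is just the remark that the reduction in the proof of Theorem~\ref{qdiv} (incidence matrix, column sums equal to $2$, size bound carried over verbatim, $J\mapsto F$) does the job, which is what you spell out. The extra bookkeeping you supply about polynomial-time computability and solution translation is the routine content the paper leaves implicit.
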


\section{Degree-constrained subgraphs: Louigi's problem}
\label{sec:louigi}


In Louigi's problem, given are a graph $G=(V,E)$ and forbidden sets $F(v)\subseteq \mathbb{N}$ for every $v\in V$. By an $F$-avoiding subgraph we mean a subgraph $\emptyset \neq E' \subseteq E$ such that for every $v \in V$ the number of incident edges of $E'$ is not in $F(v)$. Shirazi and Verstra\"ete \cite{louigi} proved the following theorem.
We give a new proof using our techniques.


\begin{theo}[Shirazi, Verstra\" ete \cite{louigi}] \label{louigi}
If $0 \not \in F(v)$ for all $v\in V$ and $\sum_{v\in V} |F(v)| < |E|$, then there exists a nontrivial $F$-avoiding subgraph.
\end{theo}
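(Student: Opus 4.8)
The plan is to deduce Theorem~\ref{louigi} from Theorem~\ref{maintheorem} in essentially the same way that Theorem~\ref{genolsontheorem} was deduced, using the incidence matrix of $G$ as the system of linear forms and translating the ``forbidden degree'' conditions into \emph{price} bounds. Concretely, let $|E|=m$, $|V|=n$, and let $a_{vj}=1$ if edge $j$ is incident to vertex $v$ and $0$ otherwise. For each $v$, set $f_v(\vecc{x})=\sum_{j=1}^m a_{vj}x_j$, a linear polynomial over $\mathbb{Z}$ with no constant term. We want the prescribed set $Q_v$ to be the set of \emph{allowed} degrees at $v$, i.e.\ the complement of $F(v)$. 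The subtlety is that $F(v)\subseteq\mathbb{N}$ is an honest set of integers, not a set of residues, while Theorem~\ref{maintheorem} works modulo a prime power $p^{d_v}$. So the first step is to pick a single prime $p$ and exponents $d_v$ large enough that the relevant degrees are ``resolved'' modulo $p^{d_v}$: since $f_v(\vecc{x})$ for $\vecc{x}\in\{0,1\}^m$ takes values in $\{0,1,\dots,\deg_G(v)\}$, it suffices to take $p^{d_v}>\deg_G(v)$, so that distinct attainable degrees are distinct residues modulo $p^{d_v}$. Then let $F'(v)=F(v)\cap\{1,\dots,\deg_G(v)\}$, viewed as a subset of $\mathbb{Z}_{p^{d_v}}$, and let $Q_v=\mathbb{Z}_{p^{d_v}}\setminus F'(v)$; note $0\in Q_v$ since $0\notin F(v)$.

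The key estimate is then $price(\mathbb{Z}_{p^{d_v}}\setminus Q_v)=price(F'(v))\le |F'(v)|\le |F(v)|$. This is immediate from Definition~\ref{covering}: for each forbidden value $b\in F'(v)$ take the degree-one integer-valued polynomial $h_b(x)=x-b$, which satisfies $p\mid h_b(b)$, and $p\nmid h_b(0)$ because $0<b<p^{d_v}$ forces $b\not\equiv 0$. Hence $\mathcal{H}_v=\{h_b:b\in F'(v)\}$ covers $F'(v)$ with total degree $|F'(v)|$. Now apply Theorem~\ref{maintheorem}: its hypothesis is
\[
m>\sum_{v\in V}\deg(f_v)\cdot price(\mathbb{Z}_{p^{d_v}}\setminus Q_v)=\sum_{v\in V}1\cdot price(F'(v))\le \sum_{v\in V}|F(v)|,
\]
which holds by assumption $\sum_{v\in V}|F(v)|<|E|=m$. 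Therefore there is $\vecc{0}\neq\vecc{x}\in\{0,1\}^m$ with $f_v(\vecc{x})\equiv q_v\ \modu{p^{d_v}}$ for some $q_v\in Q_v$, for every $v$.

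Finally I would translate this back to subgraphs. Set $E'=\{j:x_j=1\}$; then $E'\neq\emptyset$. For each $v$, the number of edges of $E'$ incident to $v$ is exactly $f_v(\vecc{x})$, an integer in $\{0,\dots,\deg_G(v)\}$, and it is congruent mod $p^{d_v}$ to some element of $Q_v=\mathbb{Z}_{p^{d_v}}\setminus F'(v)$. Since $p^{d_v}>\deg_G(v)$, the value $f_v(\vecc{x})$ is the \emph{unique} element of $\{0,\dots,\deg_G(v)\}$ in its residue class mod $p^{d_v}$, so $f_v(\vecc{x})\notin F'(v)$, and because $f_v(\vecc{x})\le\deg_G(v)$ this means $f_v(\vecc{x})\notin F(v)$. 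Thus $E'$ is a nontrivial $F$-avoiding subgraph. The only genuinely delicate point is the choice of modulus: one must make the prime power strictly exceed the degree so that congruence faithfully encodes equality of attainable degrees, and one must observe that choosing a common prime $p$ (even taking the same $d$ for all vertices, with $p^d>\max_v\deg_G(v)$) is harmless since Theorem~\ref{maintheorem} allows arbitrary exponents $d_v$; everything else is a direct substitution into the machinery already built. I would also remark that this proof simultaneously yields the same conclusion for degree conditions ``modulo $p^{d_v}$'' in place of absolute forbidden sets, with $|F(v)|$ replaced by $price(F(v))$, giving a strictly stronger statement via Theorem~\ref{kappabound} when the forbidden residues have structure.
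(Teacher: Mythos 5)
Your overall route is the paper's route: encode the vertex-degree conditions with the incidence matrix as linear forms $f_v$, choose a modulus exceeding the maximum degree so that congruence faithfully records the attainable degrees, bound the price of the forbidden set by its cardinality using the linear covering polynomials $x-b$, and apply Theorem~\ref{genolsontheorem} (equivalently, Theorem~\ref{maintheorem} for linear forms). The paper does exactly this with $d=1$ and a single prime $p$ greater than the maximum degree of $G$.

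There is, however, one step in your more general prime-power version that fails. You justify $p\nmid h_b(0)$ for $h_b(x)=x-b$ by saying that $0<b<p^{d_v}$ forces $b\not\equiv 0$; but $b\not\equiv 0\pmod{p^{d_v}}$ does not give $p\nmid b$ when $d_v>1$. For example, with $p=2$, $d_v=2$, $\deg_G(v)=3$ and $2\in F'(v)$, the polynomial $h_2(x)=x-2$ has $h_2(0)=-2$ divisible by $p$, so it is not admissible in Definition~\ref{covering}; in fact no admissible degree-one integer-valued polynomial $\alpha_0+\alpha_1 x$ with $p\nmid\alpha_0$ can vanish mod $p$ at a multiple of $p$, and one checks $price(\{2\})=\kappa(\{2\})=2$ in $\mathbb{Z}_4$. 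Hence your inequality $price(F'(v))\le|F'(v)|$ is false in general for proper prime powers, and the hypothesis $\sum_{v}|F(v)|<|E|$ would then not imply the hypothesis of Theorem~\ref{maintheorem}. The repair is exactly the paper's choice: take $d_v=1$ for all $v$ and $p$ a prime larger than the maximum degree, so that every $b\in F'(v)$ satisfies $0<b<p$ and therefore $p\nmid b$. With that specialization your argument is complete and coincides with the paper's; your closing remark about the modulo-$p^{d}$ variant is Theorem~\ref{louigimodulo} in the paper, where $|F(v)|$ is correctly replaced by $price(F(v))$ rather than by the cardinality.
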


\begin{proof} Let $p$ be a prime greater than the maximum degree in $G$. For the node $v_i\in V $, let $Q_i=\mathbb{Z}_{p} \backslash F(v_i)$ and $a_{ij}=1$ if the node $v_i$ is incident to the edge $e_j \in E$, and 0 otherwise. Due to the conditions, $\sum_{v_i \in V} price(\mathbb{Z}_{p}\backslash Q_i) = \sum_{v_i \in V} | \mathbb{Z}_{p} \backslash Q_i| = \sum_{v\in V} |F(v)| < |E|$, so according to Theorem \ref{genolsontheorem}, there exists a subset $J$, which corresponds to a nontrivial $F$-avoiding subgraph.
\end{proof}

Note that, in \cite{louigi}, the authors also proved their theorem via the Combinatorial Nullstellensatz, but in a different way via polynomials over $\mathbb{R}$.



One may ask a version of Louigi's problem modulo prime powers:  given are a prime power $p^d$, a graph $G=(V,E)$ and forbidden sets modulo $p^d$: $F(v)\subseteq \mathbb{Z}_{p^d}$ for every $v\in V$. By an $F$-avoiding subgraph modulo $p^d$ we mean a subgraph $\emptyset \neq E' \subseteq E$ such that for every $v \in V$ the number of incident edges of $E'$ is not congruent to any number in $F(v)$ modulo $p^d$. We can show the following.

\begin{theo} \label{louigimodulo}
If $0 \not \in F(v)$ for all $v\in V$ and $\sum_{v\in V} price(F(v)) < |E|$, then there exists a nontrivial $F$-avoiding subgraph modulo $p^d$.
\end{theo}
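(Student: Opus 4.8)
The plan is to mimic exactly the proof of Theorem~\ref{louigi}, replacing the ordinary cardinality bound with the $price$ bound and the prime $p$ with the prime power $p^d$. So let $G=(V,E)$ with $V=\{v_1,\dots,v_n\}$ and $E=\{e_1,\dots,e_m\}$, and set $a_{ij}=1$ if $v_i$ is incident to $e_j$, and $0$ otherwise. For each vertex $v_i$ put $d_i=d$ and $Q_i=\mathbb{Z}_{p^d}\setminus F(v_i)$; since $0\notin F(v_i)$ by hypothesis we have $0\in Q_i$, as required by Theorem~\ref{maintheorem} (and Theorem~\ref{genolsontheorem}). Note also that $\mathbb{Z}_{p^d}\setminus Q_i=F(v_i)$, so the hypothesis $\sum_{v\in V}price(F(v))<|E|$ reads $\sum_{i=1}^n price(\mathbb{Z}_{p^d}\setminus Q_i)<m$.

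Next I would invoke Theorem~\ref{genolsontheorem} (or, directly, Theorem~\ref{maintheorem} applied to the linear polynomials $f_i(\vecc{x})=\sum_{j=1}^m a_{ij}x_j$): since $m>\sum_{i=1}^n price(\mathbb{Z}_{p^d}\setminus Q_i)$, there is a nonempty $J\subseteq\{1,\dots,m\}$ with $\sum_{j\in J}a_{ij}\equiv q_i\pmod{p^{d}}$ for some $q_i\in Q_i$, for every $i$. Setting $E'=\{e_j:j\in J\}$, the quantity $\sum_{j\in J}a_{ij}$ is precisely the number of edges of $E'$ incident to $v_i$; it is congruent mod $p^d$ to an element of $Q_i=\mathbb{Z}_{p^d}\setminus F(v_i)$, hence it is \emph{not} congruent mod $p^d$ to any element of $F(v_i)$. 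Since $J$ is nonempty, $E'$ is a nontrivial $F$-avoiding subgraph modulo $p^d$, which is what was to be shown.

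There is essentially no obstacle here: the theorem is a direct corollary of Theorem~\ref{genolsontheorem} once one observes that the incidence-matrix linear forms translate "subset of edges'' into "$0/1$ vector'', exactly as in Theorem~\ref{olson}'s derivation in the Introduction and in the proof of Theorem~\ref{louigi}. The only point deserving a line of care is the bookkeeping that $\mathbb{Z}_{p^d}\setminus Q_i=F(v_i)$ and that $0\in Q_i$ follows from $0\notin F(v_i)$, so that the hypotheses of Theorem~\ref{genolsontheorem} are genuinely met; everything else is the same substitution argument. I will therefore write the proof in two short sentences paralleling the proof of Theorem~\ref{louigi}.

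\begin{proof}
Let $V=\{v_1,\dots,v_n\}$ and $E=\{e_1,\dots,e_m\}$. For the node $v_i\in V$, let $Q_i=\mathbb{Z}_{p^d}\setminus F(v_i)$ and $a_{ij}=1$ if $v_i$ is incident to $e_j$, and $0$ otherwise. Since $0\notin F(v_i)$, we have $0\in Q_i$, and $\mathbb{Z}_{p^d}\setminus Q_i=F(v_i)$, so by hypothesis $\sum_{i=1}^n price(\mathbb{Z}_{p^d}\setminus Q_i)=\sum_{v\in V}price(F(v))<|E|=m$. Applying Theorem~\ref{genolsontheorem} (with $d_i=d$ for all $i$), there is a nonempty $J\subseteq\{1,2,\dots,m\}$ with $\sum_{j\in J}a_{ij}\equiv q_i\pmod{p^{d}}$ for some $q_i\in Q_i$, for every $i$. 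Put $E'=\{e_j:j\in J\}$. Then $E'$ is nonempty, and for each $v_i$ the number of edges of $E'$ incident to $v_i$ equals $\sum_{j\in J}a_{ij}$, which is congruent modulo $p^d$ to an element of $Q_i=\mathbb{Z}_{p^d}\setminus F(v_i)$ and hence is not congruent modulo $p^d$ to any element of $F(v_i)$. Thus $E'$ is a nontrivial $F$-avoiding subgraph modulo $p^d$.
\end{proof}
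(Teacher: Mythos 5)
Your proof is correct and follows exactly the same route as the paper: setting $Q_i=\mathbb{Z}_{p^d}\setminus F(v_i)$, taking the vertex--edge incidence matrix as the coefficients $a_{ij}$, and applying Theorem~\ref{genolsontheorem}. The only difference is that you spell out the bookkeeping ($0\in Q_i$, the translation of $J$ into $E'$) that the paper leaves implicit.
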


\begin{proof} Similarly to the proof above, for the node $v_i\in V $, let $Q_i=\mathbb{Z}_{p^d} \backslash F(v_i)$ and $a_{ij}=1$ if the node $v_i$ is incident to the edge $e_j \in E$, and 0 otherwise. Due to the conditions, $\sum_{v_i \in V} price(\mathbb{Z}_{p}\backslash Q_i) =  \sum_{v\in V} price(F(v)) < |E|$, so according to Theorem \ref{genolsontheorem}, there exists a subset $J$, which corresponds to a nontrivial $F$-avoiding subgraph modulo $p^d$.
\end{proof}



Frank et al. \cite{frank} gave a polynomial time combinatorial algorithm for finding an $F$-avoiding subgraph as in Theorem \ref{louigi}. For $F$-avoiding subgraph modulo $2^d$, we show that the search problem belongs to PPA.

Let us define the corresponding computational problem. The existence of a solution is guaranteed by Theorem \ref{louigimodulo}.

\begin{center}
\fbox{
\parbox{0.85\linewidth}{
\smallskip

\noindent
\center
\textsc{Degree-constrained subgraph modulo $2^d$.}
\vspace{1mm}

\begin{tabularx}{\linewidth}{lX}
\textit{Input:}& a positive integer $d$, a graph $G=(V,E)$, subsets $F(v)\subseteq \mathbb{Z}_{p^d}$ such that $\sum_{v\in V} price(F(v)) < |E|$.
\\
\textit{Find:}& a nontrivial $F$-avoiding subgraph modulo $2^d$.
\end{tabularx}
}}
\end{center}
\medskip




Similarly to the proofs of Theorems \ref{olsred}, \ref{graphred}, the proofs of Theorems \ref{louigimodulo}, \ref{genolsontheorem}, \ref{maintheorem} imply the following.

\begin{theo} \label{louigired}
\textsc{Degree-constrained subgraph modulo $2^d$} is polynomially reducible to the \textsc{Combinatorial Nullstellensatz over $\mathbb{F}_2$}. Consequently, \textsc{Degree-constrained subgraph modulo $2^d$} is in PPA.
\end{theo}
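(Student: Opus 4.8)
The plan is to follow exactly the template set by the proof of Theorem~\ref{olsred}, namely to chain the reduction of Theorem~\ref{louigimodulo} to the \textsc{Combinatorial Nullstellensatz over $\mathbb{F}_2$} through the explicit polynomial constructed in the proofs of Theorem~\ref{maintheorem} and Theorem~\ref{genolsontheorem}. Concretely, given an instance of \textsc{Degree-constrained subgraph modulo $2^d$} we set $p=2$, and for each vertex $v_i$ we put $Q_i=\mathbb{Z}_{2^{d}}\backslash F(v_i)$ and $a_{ij}=1$ iff $v_i$ is incident to $e_j$. Since $\sum_i price(\mathbb{Z}_{2^d}\backslash Q_i)=\sum_v price(F(v))<|E|=m$, the polynomial
\[
f(\vecc{x}) = \prod_{i=1}^n \prod_{h\in\mathcal H_i}\Psi^h\!\left(f_i(\vecc{x})\right) - c\cdot\prod_{j=1}^m(1-x_j)
\]
from Equation~(\ref{eq:maintheorem}) with $f_i(\vecc{x})=\sum_j a_{ij}x_j$ has degree $m$ and the coefficient of $x_1\cdots x_m$ equal to $-c(-1)^m\neq 0$, and its nonzero points in $\{0,1\}^m$ correspond precisely to the nontrivial $F$-avoiding subgraphs modulo $2^d$. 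Thus the output vector $\vecc{s}$ of a \textsc{Combinatorial Nullstellensatz over $\mathbb{F}_2$} oracle directly yields the edge set $E'=\{e_j:s_j=1\}$, and the second PPA conclusion follows from Theorem~\ref{combulltheo}.

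Second, I would verify that this is a genuine polynomial-time many-one reduction. The covering polynomial sets $\mathcal H_i$ can be taken to be the explicit ones built in the proof of Theorem~\ref{kappabound} (products of $\prod(T-q_i)$ over residue systems divided by an appropriate power of $2$), each of degree at most $price(F(v_i))\le 2^{d}$, and the coefficients of $\Psi^h(f_i)$ are integers computable by the binomial-style expansion in Lemma~\ref{psi}; since $\deg\Psi^h(f_i)\le\deg h$ and each $f_i$ is linear, the number of monomials of $f$ is polynomial in $n$, $m$ and $2^{d}$, hence polynomial in the input size (note $2^{d}-1<m/(n)$-type bounds, more precisely $\sum_v price(F(v))<m$ forces $2^d$ to be at most roughly $m$, so $2^d$ is polynomially bounded). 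I would also note that $f$ is naturally presented in the block form $\sum_i\prod_j p_{ij}$ required by \textsc{Combinatorial Nullstellensatz over $\mathbb{F}_2$}, with the $-c\prod(1-x_j)$ term being a single extra block, and the pairing function certifying the coefficient of $x_1\cdots x_m$ is trivial because that monomial occurs exactly once (it comes only from the $\prod(1-x_j)$ block).

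Finally, the mild obstacle to spell out: the definition of \textsc{Combinatorial Nullstellensatz over $\mathbb{F}_2$} requires $\sum_j\deg(p_{ij})\le m$ for \emph{each} block, and our total degree bound is a sum over all blocks, $\sum_i\deg(f_i)\cdot price(\mathbb{Z}_{2^{d_i}}\backslash Q_i)<m$, which is even stronger than needed blockwise; one just has to observe that each block $\prod_{h\in\mathcal H_i}\Psi^h(f_i)$ has degree $\le price(F(v_i))<m$ and the extra block $\prod_j(1-x_j)$ has degree exactly $m$, so all blockwise constraints hold. There is nothing deep here — the only care needed is the bookkeeping that every quantity produced (covering polynomials, their integer coefficients, the constant $c=f(\vecc 0)$, the number of blocks and monomials) is computable in time polynomial in the encoding length of $(d,G,\{F(v)\})$, which is identical to the size argument already made for Theorem~\ref{olsred}. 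Hence \textsc{Degree-constrained subgraph modulo $2^d$} reduces in polynomial time to \textsc{Combinatorial Nullstellensatz over $\mathbb{F}_2$}, and is therefore in PPA.
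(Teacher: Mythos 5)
Your proof follows essentially the same route as the paper: the paper's own justification of this theorem is a one-line pointer to the template of Theorem~\ref{olsred}, namely to hand the polynomial of Equation~(\ref{eq:maintheorem}) built in the proofs of Theorems~\ref{maintheorem}, \ref{genolsontheorem} and \ref{louigimodulo} (with $p=2$ and the incidence data $a_{ij}$) to a \textsc{Combinatorial Nullstellensatz over $\mathbb{F}_2$} oracle, and your degree bookkeeping, block decomposition and trivial pairing for the single occurrence of $x_1\cdots x_m$ all match that template. The one fragile point is your claim that each $\Psi^h(f_i)$ has polynomially many monomials --- $\Psi_r(f_i)$ is an elementary symmetric polynomial with $\binom{\deg_G(v_i)}{r}$ terms, which need not be polynomial when $price(F(v_i))$ is large, so a succinct representation of these factors is implicitly required --- but the paper's own size estimate in the proof of Theorem~\ref{olsred} glosses over exactly the same representation issue, so your argument is faithful to (and somewhat more detailed than) the source.
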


\section*{Acknowledgments}

I am grateful to L\'aszl\'o V\'egh for his extremely helpful comments and valuable suggestions.

\end{document}